\newtheorem{thm}{Theorem}[section]
\newtheorem{prop}[thm]{Proposition}
\newtheorem{lem}[thm]{Lemma}
\newtheorem{df}[thm]{Definition}
\newtheorem{rem}[thm]{Remark}
\newtheorem{cor}[thm]{Corollary}
\newtheorem{ass}[thm]{Assumption}
\def\be#1 {\begin{equation} \label{#1}}
\newcommand{\ee}{\end{equation}}
\def\sqw{\hbox{\rlap{\leavevmode\raise.3ex\hbox{$\sqcap$}}$%
\sqcup$}}
\def\findem{\ifmmode\sqw\else{\ifhmode\unskip\fi\nobreak\hfil
\penalty50\hskip1em\null\nobreak\hfil\sqw
\parfillskip=0pt\finalhyphendemerits=0\endgraf}\fi}
\newcommand{\mb}{\medskip\noindent}
\newcommand{\R}{\mathbb R}
\newcommand{\N}{\mathbb N}
\newcommand{\C}{\mathbb C}
\newcommand{\A}{{\mathcal A}}
\newcommand{\K}{\kappa}
\newcommand{\tphi}{{\widetilde{\phi}}}
\newcommand{\liea}{\mathfrak{g}}
\newcommand{\cg}{\mathbb{G}}
\def\Xint#1{\mathchoice
   {\XXint\displaystyle\textstyle{#1}}%
   {\XXint\textstyle\scriptstyle{#1}}%
   {\XXint\scriptstyle\scriptscriptstyle{#1}}%
   {\XXint\scriptscriptstyle\scriptscriptstyle{#1}}%
   \!\int}
\def\XXint#1#2#3{{\setbox0=\hbox{$#1{#2#3}{\int}$}
     \vcenter{\hbox{$#2#3$}}\kern-.5\wd0}}
\def\aver#1{\Xint-_{#1}}
\newcommand{\M}{{\mathcal M}}
\begin{document}

\author{Fr\'ed\'eric Bernicot}
\address{Fr\'ed\'eric Bernicot - CNRS - Universit\'e Lille 1 \\ Laboratoire de math\'ematiques Paul Painlev\'e \\ 59655 Villeneuve d'Ascq Cedex, France}
\curraddr{}
\email{frederic.bernicot@math.univ-lille1.fr}

\author{Yannick Sire}
\address{Yannick Sire - LATP-UMR6632-Universit\'e Paul C\'ezanne\\13397 Marseille, France}
\email{sire@cmi.univ-mrs.fr}

\date{\today}

\title[Para-differential calculus on a manifold]{Propagation of low regularity for solutions of nonlinear PDEs on a Riemannian manifold with a sub-Laplacian structure}

\subjclass[2000]{...}

\maketitle

\begin{abstract}
Following \cite{B2}, we introduce a notion of para-products associated to a semi-group. We do not use Fourier transform arguments and the background manifold is doubling, endowed with a sub-laplacian structure. Our main result is a paralinearization theorem in a non-euclidean framework, with an application to the propagation of regularity for some nonlinear PDEs.  
\end{abstract}

\tableofcontents
The theory of paradifferential calculus was introduced by Bony in \cite{bony} and developed by many others, particularly Meyer in \cite{meyer}. This tool that arises is quite powerful in nonlinear analysis. The key idea relies on 
Meyer's formula for a nonlinarity $F(f)$ as $M (x, D)f + R$ where $F$ is smooth in its argument(s), $f$ belongs to a H\"older or Sobolev space, $M (x, D)$ is a pseudodifferential operator (depending on $f$) of type $(1, 1)$ and $R$ is more regular than $f$ and $F(f)$. This operation is called the ``paralinearization".

Such an approach has given many important results (or improvements of existing results): Moser estimates, elliptic regularity estimates, Kato-Ponce inequalities, ...  and is the basis of microlocal analysis.

The notion of paradifferential operators is built on appropriate functional calculus and symbolic representation, available on the Euclidean space. The Fourier transform is crucial by this point of view to study and define the symbolic classes. That is why this approach cannot be extended to Riemannian manifolds.

However, for the last years, numerous works deal with nonlinear PDEs on manifolds. So it seems important to try to extend this tool of ``paralinearization'' in a non-Euclidean situation.

First, on specific situations, namely on a Carnot group it is possible to define a suitable Fourier transform, involving irreductible representations. In this context, we can also define the notion of symbols and so of pseudo-differential calculus (see the survey \cite{BFG} of Bahouri, Fermanian-Kammerer and Gallagher for Heisenberg groups and \cite{GS} of Gallagher and Sire for more general Carnot groups). Excepted this particular setting, no Fourier transform are known.

Following this observation, the aim of this current work is to define another suitable notion of paralinearization on a manifold, without requiring use of Fourier transform. Since (nonlinear) PDEs on a manifold usually requires vector fields, we work on a manifold having a sub-Riemannian structure.
To define a suitable paralinearization, we use paraproducts defined via the heat semigroup (introduced by Bernicot in \cite{B2}, independently by Frey in \cite{Phd, Frey} and already used by Badr, Bernicot and Russ in \cite{BBR} to get Leibniz type estimates and algebra properties for Sobolev spaces) and look for a paralinearization result. 
However, a new phenomenom appears (due to the lack of flexibility of the method), the classical paralinearization result holds only for low regularity. More precisely, we prove the following (see Section \ref{sec:sobo} for Sobolev spaces and Section \ref{sec:para} for the definition of the paraproduct $\Pi$): let consider Sobolev spaces associated to a Sub-Laplacian operator on a Riemannian manifold then (under usual assumptions), we have

\begin{thm} Consider $p\in(1,\infty)$, $s\in(d/p,1)$ and $f\in W^{s+\epsilon,p}$ for some $\epsilon>0$ (as small as we want). Then for every smooth function $F\in C^\infty(\R)$ with $F(0)=0$,
\begin{equation} F(f) = \Pi_{F'(f)}(f) + w  \end{equation}
with $w\in W^{2s-d/p,p}$.
\end{thm}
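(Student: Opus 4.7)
My approach follows the classical Bony--Meyer paralinearization technique, adapted to the heat-semigroup framework that replaces the Littlewood--Paley decomposition throughout this paper. Writing $P_t = e^{-tL}$ with $L$ the sub-Laplacian, and exploiting $F(0) = 0$ together with $P_t f \to f$ as $t \to 0$ and $P_t f \to 0$ as $t \to \infty$, I would begin from the fundamental identity
\[
F(f) \;=\; -\int_0^\infty \frac{d}{dt} F(P_t f)\,dt \;=\; \int_0^\infty F'(P_t f)\cdot LP_t f\,dt.
\]
The paraproduct $\Pi_{F'(f)}(f)$ defined in Section \ref{sec:para} has an analogous representation in which $F'(P_t f)$ is replaced (up to the spectral multipliers appearing in the definition of $\Pi$) by $P_t F'(f)$, so the remainder takes the form
\[
w \;=\; \int_0^\infty \bigl[F'(P_t f) - P_t F'(f)\bigr]\,LP_t f\,dt,
\]
which I split as $[F'(P_t f) - F'(f)] + [F'(f) - P_t F'(f)]$.

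Since $s > d/p$, Sobolev embedding gives $f \in L^\infty \cap C^{s-d/p}$, so $F'$ is Lipschitz on the range of $f$. The first piece is bounded pointwise by $|P_t f - f| \lesssim t^{(s-d/p)/2 + \epsilon'}\|f\|_{W^{s+\epsilon, p}}$ via the H\"older regularity of $f$ (equivalently, the standard semigroup smoothing estimate in $L^\infty$). For the second piece, one invokes semigroup smoothing applied to $F'(f)$; the necessary Sobolev control on the composition $F'(f)$ is obtained from a standalone Moser-type bound, available because $W^{s,p}$ is an algebra under the standing assumption $s > d/p$. Combined with the band-pass estimate $\|LP_t f\|_{L^p} \lesssim t^{s/2 - 1 + \epsilon'}$, the integrand exhibits the scaling $t^{(2s - d/p)/2 - 1 + \epsilon''}$, integrable near $t = 0$ precisely because the hypothesis $s > d/p$ gives $2s - d/p > 0$. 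The upgrade from an $L^p$ bound on $w$ to a $W^{2s - d/p, p}$ bound is then obtained by repeating the estimate after composing with a fractional power of $L$ and redistributing it onto the factor $LP_t f$, using the cancellation built into the band-pass operator $LP_t = t^{-1}(tL)e^{-tL}$.

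The main obstacle, in my view, is making the estimate on the second piece non-circular: controlling $F'(f) - P_t F'(f)$ requires Sobolev regularity of $F'(f)$, and one must be careful not to let this rely on the very paralinearization theorem we are proving. The resolution is to use a direct algebra/Moser estimate for the composition $F'\circ f$, which sits outside the paralinearization machinery. A second, more technical, subtlety is the large-$t$ endpoint of the integral: either one assumes a strictly positive spectral gap so that $P_t f$ decays as $t \to \infty$, or one introduces a low-frequency cutoff that is absorbed into the smoother remainder by a direct Lipschitz estimate on $F$ using $F(0)=0$.
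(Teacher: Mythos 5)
Your starting identity $F(f)=\int_0^\infty F'(P_tf)\,LP_tf\,dt$ is valid, and the split of $F'(P_tf)-P_tF'(f)$ into $[F'(P_tf)-F'(f)]+[F'(f)-P_tF'(f)]$ followed by Lipschitz and semigroup-smoothing estimates is a reasonable way to get an $L^p$ bound on the remainder. But the final and essential step, upgrading from $L^p$ to $W^{2s-d/p,p}$, fails in your formulation, and the way you hope to fix it (\emph{``composing with a fractional power of $L$ and redistributing it onto the factor $LP_tf$''}) is not available. The integrand in your $w$ is a \emph{pointwise product} of two functions with no operator applied on the outside: $L^\beta$ does not commute with multiplication by the factor $F'(P_tf)-P_tF'(f)$, so there is no legitimate way to slide $L^\beta$ onto $LP_tf$ and use the cancellation of $(tL)e^{-tL}$. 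One would have to carry out a fractional Leibniz rule for $L^\beta$ applied to a product, which reintroduces exactly the difficulties paralinearization is meant to avoid. This is precisely why the paper does not start from the bare identity $F(f)=-\int_0^\infty\frac{d}{dt}F(P_tf)\,dt$: it instead writes $F(f)=\tphi(L)F(\tphi(L)f)-\int_0^1\frac{d}{dt}\bigl[\tphi(tL)F(\tphi(tL)f)\bigr]\,dt$, with an \emph{outer} band-limited operator $\tphi(tL)$ present from the start. Differentiating this outer operator and commuting $tL$ into the bracket produces terms of the schematic form $\tilde\psi(tL)[\cdots]$ and $\tphi(tL)[\cdots]$, and it is the outer $\tilde\psi(tL)$, $\tphi(tL)$ (built from the high-order multiplier $\psi(x)=c_0x^Ne^{-x}(1-e^{-x})$, $N>d/2$, not $e^{-x}$) that absorbs $L^\beta$: $L^\beta\tilde\psi(tL)=t^{-\beta}(tL)^\beta\tilde\psi(tL)$ is again a good multiplier, and by duality one pairs against the square function of the test function. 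Your plan has no mechanism playing this role.

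There is a second, related omission. When one differentiates $\tphi(tL)F(\tphi(tL)f)$ in $t$, the term arising from the outer operator requires applying $L$ to the composition $F(\tphi(tL)f)$, and the sub-Laplacian chain rule $L(F(g))=F'(g)Lg-F''(g)|Xg|^2$ produces the explicit quadratic term $F''(\tphi(tL)f)\,|t^{1/2}X\tphi(tL)f|^2$ (the paper's term $II$). Your decomposition never exhibits this term; it is hidden inside the commutator $F'(P_tf)-P_tF'(f)$, whose small-$t$ Taylor expansion begins with $-tF'''(f)|Xf|^2$. Extracting and estimating this $|Xf|^2$ contribution separately is where the exponent $2s-d/p$ comes from and where the restriction $s<1$ enters (through the bound $\|t^{1/2}X\tphi(tL)f\|_{L^\infty}\lesssim t^{(s-d/p-\epsilon)/2}\|f\|_{W^{s,p}}$ and the boundedness of the associated square function). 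Treating the commutator as a black box loses this structure. Finally, a smaller point: the paper's paraproduct $\Pi_{F'(f)}(f)$ consists of two terms with the specific multipliers $\tilde\psi,\psi,\tphi$ and integrates over $(0,\infty)$, so identifying your $\int_0^\infty P_tF'(f)\cdot LP_tf\,dt$ with $\Pi_{F'(f)}(f)$ ``up to spectral multipliers'' is not innocuous — the difference contributes genuine remainder terms (the paper's $I$ and $V$), which need their own estimates. Your instincts about non-circularity of the composition bound and about the large-$t$ end are correct, but the missing outer operator is the decisive gap.
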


As in the Euclidean situation, we are able to obtain some applications concerning propagation of the regularity for solutions of nonlinear PDEs.

With respect to the well-known paralinearization results, the first point is that we have only a gain of regularity at order $s-d/p-\epsilon$ and the main difference is that this result is only proved for $s<1$. This condition can be viewed as very strong but we will explain in Remark \ref{rem:imp} how to allow larger regularity $s>1$ with adding extra terms in the paraproducts (involving the higher order derivatives of the nonlinearity $F$). This limitation $s<1$ already appeared in \cite{CRT} and \cite{BBR}: the use of higher order Sobolev spaces require to understand higher order Riesz transforms and some cancellations properties of the iterated laplacians $\Delta^k$ ... which seems to be very difficult.

\bigskip

\section{Preliminaries : Riemannian structure with a sub-Laplacian operator} \label{sec:pre}

In this section, we aim to describe the framework and the required assumptions, we will use after. Let us precise the main hypothesis about the manifold $M$ and the operator $L$.

\subsection{Structure of doubling Riemannian manifold}

In all this paper, $M$ denotes a complete Riemannian manifold. We write $\mu$ for the Riemannian measure on $M$, $\nabla$ for the
Riemannian gradient, $|\cdot|$ for the length on the tangent space (forgetting the subscript $x$ for simplicity) and
$\|\cdot\|_{L^p}$ for the norm on $ L^p:=L^{p}(M,\mu)$, $1 \leq p\leq +\infty.$  We denote by $B(x, r)$ the open ball of
center $x\in M $ and radius $r>0$. 

\subsubsection{The doubling property}

\begin{df}[Doubling property] Let $M$ be a Riemannian manifold. One says that $M$ satisfies the doubling property $(D)$ if there exists a constant $C>0$, such that for all $x\in M,\, r>0 $ we have
\begin{equation*}\tag{$D$}
\mu(B(x,2r))\leq C \mu(B(x,r)).
\end{equation*}
\end{df}

\begin{lem} Let $M$ be a Riemannian manifold satisfying $(D)$ and let $d:=log_{2}C$. Then for all $x,\,y\in M$ and $\theta\geq 1$
\begin{equation}\label{eq:d}
\mu(B(x,\theta R))\leq C\theta^{d}\mu(B(x,R))
\end{equation}
There also exists $c$ and $N\geq 0$, so that for all $x,y\in M$ and $r>0$
\be{eq:N} \mu(B(y,r)) \leq c\left(1+\frac{d(x,y)}{r} \right)^N \mu(B(x,r)). \ee
\end{lem}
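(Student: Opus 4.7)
The plan is to deduce the first inequality by iterating the doubling condition $(D)$, and then use it as a black box for the second. Given $\theta \geq 1$, I would pick the unique integer $k \geq 0$ with $2^{k-1} \leq \theta < 2^{k}$, so that $B(x, \theta R) \subset B(x, 2^{k} R)$. Applying $(D)$ exactly $k$ times yields $\mu(B(x, 2^{k} R)) \leq C^{k} \mu(B(x,R))$. Since $k \leq 1 + \log_{2}\theta$, we get $C^{k} \leq C \cdot C^{\log_{2}\theta} = C\,\theta^{\log_{2}C} = C\,\theta^{d}$, which is exactly \eqref{eq:d}.

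For \eqref{eq:N}, I would combine this with an elementary ball inclusion. By the triangle inequality $B(y,r) \subset B(x, r + d(x,y))$, so with $\theta := 1 + d(x,y)/r \geq 1$ one has $\mu(B(y,r)) \leq \mu(B(x, \theta r))$. The already-established estimate then gives
$$\mu(B(y,r)) \leq C\,\theta^{d}\,\mu(B(x,r)) = C\left(1 + \frac{d(x,y)}{r}\right)^{d}\mu(B(x,r)),$$
so one may take $c = C$ and $N = d$.

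The argument is purely metric-measure-theoretic; the Riemannian structure plays no role, and there is no real obstacle. The only points worth watching are the bookkeeping of constants --- the choice $d = \log_{2}C$ is exactly what makes the passage $C^{k} \leadsto \theta^{d}$ work --- and the fact that the summand $1$ in $1 + d(x,y)/r$ is needed precisely to guarantee $\theta \geq 1$ even when $y$ lies very close to $x$, so that the first inequality can be applied.
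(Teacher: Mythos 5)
Your proof is correct. The paper states this lemma without proof, treating it as a standard fact about doubling metric measure spaces, and the argument you give is the canonical one: iterate $(D)$ over dyadic scales to get $\mu(B(x,\theta R))\leq C\theta^d\mu(B(x,R))$, then feed the inclusion $B(y,r)\subset B(x,r+d(x,y))$ into that estimate with $\theta=1+d(x,y)/r$. One small remark: your argument yields \eqref{eq:N} with $N=d$, whereas the paper keeps $N$ as a separate (possibly much smaller) parameter because in good situations — e.g.\ $\R^d$, or any space where ball volume is independent of the center — one can take $N=0$; the lemma only asserts existence of some admissible $N$, so your choice is fine.
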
 
\noindent For example, if $M$ is the Euclidean space $M=\R^d$ then $N=0$ and $c=1$. \\
Observe that if $M$ satisfies $(D)$ then
$$ \textrm{diam}(M)<\infty\Leftrightarrow\,\mu(M)<\infty\,\textrm{ (see \cite{ambrosio1})}. $$
Therefore if $M$ is a complete Riemannian manifold satisfying $(D)$ then $\mu(M)=\infty$.

\begin{thm}[Maximal theorem]\label{MIT} (\cite{coifman2})
Let $M$ be a Riemannian manifold satisfying $(D)$. Denote by $\M$ the uncentered Hardy-Littlewood maximal function
over open balls of $M$ defined by
 $$ \M f(x):=\underset{\genfrac{}{}{0pt}{}{Q \ \textrm{ball}}{x\in Q}} {\sup} \ |f|_{Q} $$
 where $\displaystyle f_{E}:=\aver{E} f d\mu:=\frac{1}{\mu(E)}\int_{E}f d\mu.$
Then for every  $p\in(1,\infty]$, $\M$ is $L^p$ bounded and moreover of weak type $(1,1)$.
\\
Consequently for $s\in(0,\infty)$, the operator $\M_{s}$ defined by
$$ \M_{s}f(x):=\left[\M(|f|^s)(x) \right]^{1/s} $$
is of weak type $(s,s)$ and $L^p$ bounded for all $p\in(s,\infty]$.
\end{thm}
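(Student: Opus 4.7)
The plan is to prove the weak $(1,1)$ bound for $\M$ via a Vitali-type covering argument adapted to the doubling metric space $M$, derive $L^p$ boundedness for $1<p\le\infty$ by Marcinkiewicz interpolation against the trivial $L^\infty$ bound, and then deduce the statements about $\M_s$ by a straightforward rewriting that turns the level sets of $\M_s f$ into level sets of $\M(|f|^s)$.

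For the weak $(1,1)$ bound, fix $f\in L^1(M,\mu)$ and $\lambda>0$, and set $E_\lambda:=\{x\in M:\M f(x)>\lambda\}$. By definition, for each $x\in E_\lambda$ there exists a ball $Q_x\ni x$ with $\mu(Q_x)^{-1}\int_{Q_x}|f|\,d\mu>\lambda$, which forces $\mu(Q_x)<\lambda^{-1}\|f\|_{L^1}$; in particular, using the lower volume growth control derivable from the doubling hypothesis (via \eqref{eq:N}), the radii of these balls are uniformly bounded on any fixed bounded subset. I would then invoke the standard Vitali $5r$-covering lemma, which holds in any metric space for families of uniformly bounded radii: extract a pairwise disjoint subfamily $(Q_i)_{i\in I}$ from $\{Q_x\}_{x\in E_\lambda}$ such that $\bigcup_x Q_x\subset\bigcup_i 5Q_i$. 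Applying the doubling property \eqref{eq:d} (with $\theta=5$, up to recentering, using \eqref{eq:N} to move the center) yields $\mu(5Q_i)\lesssim\mu(Q_i)$, so
\begin{equation*}
\mu(E_\lambda)\le\sum_i\mu(5Q_i)\lesssim\sum_i\mu(Q_i)\le\frac{1}{\lambda}\sum_i\int_{Q_i}|f|\,d\mu\le\frac{\|f\|_{L^1}}{\lambda},
\end{equation*}
where the last inequality uses disjointness of the $Q_i$. The $L^\infty$ bound $\|\M f\|_{L^\infty}\le\|f\|_{L^\infty}$ is immediate from averaging, so Marcinkiewicz interpolation gives $L^p$ boundedness for every $p\in(1,\infty)$.

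The statement for $\M_s$ follows by a simple substitution: since $\M_s f(x)^s=\M(|f|^s)(x)$, we have $\{\M_s f>\lambda\}=\{\M(|f|^s)>\lambda^s\}$, so the weak $(1,1)$ estimate applied to $|f|^s\in L^1$ yields
\begin{equation*}
\mu\bigl(\{\M_s f>\lambda\}\bigr)\lesssim\frac{1}{\lambda^s}\int_M|f|^s\,d\mu=\frac{\|f\|_{L^s}^s}{\lambda^s},
\end{equation*}
which is weak $(s,s)$. For $p\in(s,\infty)$ we have $p/s>1$ and $|f|^s\in L^{p/s}$, so the $L^{p/s}$-boundedness of $\M$ established above gives $\|\M_s f\|_{L^p}^s=\|\M(|f|^s)\|_{L^{p/s}}\lesssim\||f|^s\|_{L^{p/s}}=\|f\|_{L^p}^s$; the case $p=\infty$ is immediate.

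The main obstacle is setting up the Vitali covering lemma carefully in the non-Euclidean, only-doubling setting: balls need not be determined by their center and radius (in a general metric space several parametrizations of the same set are possible, though this is harmless), the uncentered nature of $\M$ forces us to deal with balls containing $x$ rather than centered at $x$, and the application of the doubling property to $5Q_i$ requires the quantitative estimate \eqref{eq:N} so that we do not lose control when a ball is re-expanded around a different center. Once this is handled, everything else is a formal consequence of Marcinkiewicz interpolation and the definition of $\M_s$.
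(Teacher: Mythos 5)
The paper does not prove this theorem; it is quoted directly from Coifman--Weiss \cite{coifman2}, and your Vitali $5r$-covering argument plus Marcinkiewicz interpolation is exactly the proof in that reference, with the correct reduction of $\M_s$ to $\M$ via the substitution $f\mapsto|f|^s$. One small inaccuracy worth flagging: the uniform bound on the radii of the balls $Q_x$ needed to invoke the basic $5r$-covering lemma does not come from \eqref{eq:N} (which bounds volumes from above, the wrong direction); it comes from $\mu(Q_x)<\lambda^{-1}\|f\|_{L^1}$ combined with $\mu(M)=\infty$ (which the paper has already recorded as a consequence of completeness and doubling), forcing balls of large radius meeting a fixed bounded set to have large measure. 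Likewise the parenthetical ``recentering via \eqref{eq:N}'' is unnecessary, since $5Q_i$ is concentric with $Q_i$ and plain doubling \eqref{eq:d} applies directly. These are cosmetic; the argument is sound.
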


Doubling property allows us to control the growth of ball-volumes. However, it can be interesting to have a lower-bound too. So we will make this following assumption:

\begin{ass} \label{ass:minoration} We assume that there exists a constant $c>0$ such that for all $x\in M$
\begin{equation} \mu(B(x,1)) \geq c. \label{eq:minoration} \end{equation}
Due to the homogeneous type of the manifold $M$, this is equivalent to a below control of the volume $(MV_d)$
\begin{equation} \tag{$MV_d$} \mu(B(x,r)) \gtrsim r^d \end{equation}
for all $0<r\leq 1$.
\end{ass}

\subsubsection{Poincar\'e inequality}
\begin{df}[Poincar\'{e} inequality on $M$] \label{classP} We say that a complete Riemannian manifold $M$ admits \textbf{a Poincar\'{e} inequality $(P_{q})$} for some $q\in[1,\infty)$ if there exists a constant $C>0$ such that, for every function $f\in W^{1,q}_{loc}(M)$ (the set of compactly supported Lipschitz functions on $M$) and every ball $Q$ of $M$ of radius $r>0$, we have
\begin{equation*}\tag{$P_{q}$}
\left(\aver{Q}|f-f_{Q}|^{q} d\mu\right)^{1/q} \leq C r \left(\aver{Q}|\nabla f|^{q}d\mu\right)^{1/q}.
\end{equation*}
\end{df}
\begin{rem} By density of $C_{0}^{\infty}(M)$ in $W^{1,q}_{loc}(M)$, we can replace $W^{1,q}_{loc}(M)$ by $C_{0}^{\infty}(M)$.
\end{rem}
Let us recall some known facts about Poincar\'{e} inequalities with varying $q$.
 \\
It is known that $(P_{q})$ implies $(P_{p})$ when $p\geq q$ (see \cite{hajlasz4}). Thus, if the set of $q$ such that
$(P_{q})$ holds is not empty, then it is an interval unbounded on the right. A recent result of S. Keith and X. Zhong
(see \cite{KZ}) asserts that this interval is open in $[1,+\infty[$~:

\begin{thm}\label{kz} Let $(X,d,\mu)$ be a complete metric-measure space with $\mu$ doubling
and admitting a Poincar\'{e} inequality $(P_{q})$, for  some $1< q<\infty$.
Then there exists $\epsilon >0$ such that $(X,d,\mu)$ admits
$(P_{p})$ for every $p>q-\epsilon$.
\end{thm}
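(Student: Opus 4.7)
The plan is to prove this self-improvement property of Poincar\'e inequalities via the strategy of Keith--Zhong. The main idea is to replace the standard Poincar\'e inequality $(P_q)$ by a truncated version involving level sets of a suitable maximal function of the ``gradient,'' and then show that this truncation remains effective at exponents slightly below $q$.

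First, I would reformulate $(P_q)$ in terms of the pointwise Lipschitz constant $\mathrm{Lip}\,u(x) := \limsup_{r\to 0} \sup_{d(x,y)\leq r} |u(x)-u(y)|/r$, which in a doubling metric setting serves as the natural substitute for $|\nabla u|$. A standard chaining argument along a sequence of dyadically shrinking balls converging to $x$, combined with the doubling property \eqref{eq:d}, produces the pointwise estimate
\begin{equation*}
|u(x) - u_Q| \leq C\, r \bigl(\M(|\mathrm{Lip}\,u|^q \mathbf{1}_{\sigma Q})(x)\bigr)^{1/q}
\end{equation*}
for $x \in Q$, where $\sigma \geq 1$ is a fixed dilation constant and $\M$ is the Hardy--Littlewood maximal operator from Theorem \ref{MIT}. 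This reduces the problem to a good-$\lambda$ estimate for the distribution function of $\mathrm{Lip}\,u$.

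The key step---and the main obstacle---is to establish a truncation lemma of the form
\begin{equation*}
\mu\bigl(\{x \in Q : \mathrm{Lip}\,u(x) > \lambda\}\bigr) \leq \eta \cdot \mu\bigl(\{x \in \sigma Q : \mathrm{Lip}\,u(x) > c\lambda\}\bigr) + R(u,Q,\lambda),
\end{equation*}
with constants $\eta < 1$ and $c < 1$ depending only on the doubling constant and the constant in $(P_q)$. This is the hardest part: one performs a Whitney-type decomposition of the level set $\{\mathrm{Lip}\,u > \lambda\}$, constructs a Lipschitz truncation $u_\lambda$ that coincides with $u$ outside this set and has controlled Lipschitz constant, and then applies $(P_q)$ to $u - u_\lambda$ on each Whitney ball. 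The strict gain $\eta < 1$ is the crucial quantitative ingredient, and arises from Lebesgue differentiation at the density points of $\{\mathrm{Lip}\,u \leq \lambda\}$, which forces a strict inequality in the naive pigeonhole estimate.

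Once the truncation lemma is in hand, I would integrate it against $\lambda^{p-1}\,d\lambda$ for $p$ slightly below $q$. The geometric factor $\eta < 1$ yields a convergent geometric series exactly when $p > q - \epsilon$, with the admissible $\epsilon > 0$ determined explicitly by $\eta$ and the doubling dimension $d$ from \eqref{eq:d}. Reinjecting this improvement into the chaining estimate then delivers $(P_p)$ for all $p > q - \epsilon$, completing the proof.
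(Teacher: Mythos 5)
The paper does not prove this statement; it is quoted verbatim from Keith and Zhong's article \cite{KZ} and used as a black box. So there is no ``paper's own proof'' to compare against, and your proposal is really an attempt to reconstruct the original Keith--Zhong argument rather than a proof the authors supply.

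Your sketch captures the overall architecture of the Keith--Zhong proof correctly: reformulate $(P_q)$ as a pointwise estimate via a telescoping/chaining argument at dyadic scales, reduce matters to a maximal-function estimate, prove a good-$\lambda$ (truncation) inequality with a strictly contractive constant $\eta<1$, and integrate in $\lambda$ to lower the exponent. However, as a proof it has a genuine gap at precisely the point you yourself flag as ``the hardest part.'' The claim that the strict gain $\eta<1$ ``arises from Lebesgue differentiation at the density points of $\{\mathrm{Lip}\,u\le\lambda\}$'' is not a proof and is not in fact how Keith--Zhong obtain the improvement; Lebesgue differentiation alone gives no quantitative contraction and no uniformity in $u$, $Q$ and $\lambda$. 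In Keith--Zhong the contraction comes from a delicate combinatorial argument (a counting of Whitney balls together with a careful use of the $q$-Poincar\'e inequality on the truncated function and an absorption step), and establishing that $\eta$ is bounded away from $1$ uniformly --- and that this persists for $p$ slightly below $q$ --- occupies the bulk of their paper. Similarly, the assertion that the integration against $\lambda^{p-1}d\lambda$ ``yields a convergent geometric series exactly when $p>q-\epsilon$'' is a description of the expected conclusion, not an argument; one must show that the error term $R(u,Q,\lambda)$ integrates to something absorbable and that the implicit constants do not blow up as $p\to q^{-}$. In short, the outline is plausible and aligned with the literature, but the key lemma is asserted rather than proved, so this does not constitute a proof of the theorem.
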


\begin{ass} \label{ass:poincare} We assume that the considered manifold satisfies a Poincar\'e inequality $(P_1)$. Indeed we could just assume a Poincar\'e inequality $(P_\sigma)$ for some $\sigma<2$ and all of our results will remain true for Lebesgue exponents bigger than $\sigma$
\end{ass}

\subsection{Framework of Semigroup} \label{subsec:semigroup}

Let us recall the framework of \cite{DY1, DY}. \\
Let $\omega \in[0,\pi/2)$. We define the closed sector in the complex plane ${\mathbb C}$ by
$$ S_\omega:= \{z\in\C,\ |\textrm{arg}(z)|\leq \omega\} \cup\{0\}$$
and denote the interior of $S_\omega$ by $S_\omega^0$.
We set $H_\infty(S^0_\omega)$ for the set of bounded holomorphic functions $b$ on $S_\omega^0$, equipped with the norm
$$ \|b\|_{H_\infty(S_\omega^0)} := \|b\|_{L^\infty(S_\omega^0)}.$$
Then consider a linear operator $L$. It is said of type $\omega$ if its spectrum $\sigma(L)\subset S_\omega$ and for each $\nu>\omega$, there exists a constant $c_\nu$ such that
$$ \left\|(L-\lambda)^{-1} \right\|_{L^2\to L^2} \leq c_\nu |\lambda|^{-1}$$
for all $\lambda\notin S_\nu$.

\mb We refer the reader to \cite{DY1} and \cite{Mc} for more details concerning holomorphic calculus of such operators. In particular, it is well-known that $L$ generates a holomorphic semigroup $(\A_z:=e^{-zL})_{z\in S_{\pi/2-\omega}}$. Let us detail now some assumptions, we made on the semigroup.

\mb Assume the following conditions: there exists a positive real $m>0$ and $\delta>1$ with 
\begin{ass} \label{ass:semigroup}
\begin{itemize}
 \item For every $z\in S_{\pi/2-\omega}$, the linear operator $\A_z:=e^{-zL}$ is given by a kernel $a_z$ satisfying
\be{eq:pointwise} \left|a_{z}(x,y)\right|\lesssim \frac{1}{\mu(B(x,|z|^{1/2}))} \left(1+\frac{d(x,y)}{|z|^{1/2}}\right)^{-d-2N-\delta} \ee
where $d$ is the homogeneous dimension of the space (see (\ref{eq:d})) and $N$ is the other dimension parameter (see (\ref{eq:N})); $N\geq0$ could be equal to $0$.
 \item The operator $L$ has a bounded $H_\infty$-calculus on $L^2$. That is, there exists $c_\nu$ such that for $b\in H_\infty(S^0_\nu)$, we can define $b(L)$ as a $L^2$-bounded linear operator and
\be{eq:holocal} \|b(L)\|_{L^2\to L^2} \leq c_\nu \|b\|_\infty. \ee 
\item The Riesz transform ${\mathcal R}:=\nabla L^{-1/2}$ is bounded on $L^p$ for every $p\in(1,\infty)$.
\end{itemize}
\end{ass}

\begin{rem} \label{rem:holo} The assumed bounded $H_\infty$-calculus on $L^2$ allows us to deduce some extra properties (see \cite{DY} and \cite{Mc})~:
\begin{itemize}
 \item Due to the Cauchy formula for complex differentiation, pointwise estimate (\ref{eq:pointwise}) still holds for the kernel of $(tL)^k e^{-tL}$ with $t>0$.
 \item For any holomorphic function $\psi \in H(S_\nu^0)$ such that for some $s>0$, $ |\psi(z)|\lesssim \frac{|z|^s}{1+|z|^{2s}},$
the quadratic functional 
$$ f \rightarrow \left( \int_0^\infty \left|\psi(tL) f \right|^2 \frac{dt}{t} \right)^{1/2}$$
is $L^2$-bounded.
\end{itemize}
\end{rem}

\begin{rem} Concerning a square estimate on the gradient of the semigroup, it follows that for every integer $k\geq 0$ the square functional
\be{ass:square} f \rightarrow \left( \int_0^\infty \left|t^{1/2}\nabla (tL)^k e^{-tL}(f) \right|^2 \frac{dt}{t} \right)^{1/2} \ee
is bounded on $L^2$. Indeed, this is just a direct consequence of the boundedness of the previous square functions, by making appear the Riesz transform and uses its $L^2$-boundedness.
\end{rem}

\begin{rem} \label{rem:square} We claim that Assumption (\ref{ass:square}) is satisfied under the $L^2$-boundedness of the Riesz transform ${\mathcal R}:= \nabla L^{-1/2}$. \\
Indeed if ${\mathcal R}$ is $L^2$-bounded, then it admits $L^2$-valued estimates, which yield
$$ \left( \int_0^\infty \left| {\mathcal R}  (tL)^{k+1/2} e^{-tL}(f) \right|^2 \frac{dtd\mu }{t} \right)^{1/2} \leq \|{\mathcal R}\|_{L^2\to L^2} \left( \int_0^\infty \left| (tL)^{k+1/2} e^{-tL}(f) \right|^2 \frac{dtd\mu }{t} \right)^{1/2}.$$
This gives the desired result 
$$\left( \int_0^\infty \left| t^{1/2} \nabla (tL)^k e^{-tL}(f) \right|^2 \frac{dtd\mu }{t} \right)^{1/2} \lesssim \|f\|_{L^2},$$
thanks to Remark \ref{rem:holo}.
\end{rem}

About different square functions, we have the following proposition:

\begin{prop} \label{prop:Lp} Under these assumptions, we know that the square functionals in Remark \ref{rem:holo} or in (\ref{ass:square}) are $L^p$-bounded for every $p\in(1,\infty)$.
\end{prop}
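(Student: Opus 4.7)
The plan is to apply Calderón–Zygmund theory for operators without pointwise kernel bounds, in the spirit of Duong--McIntosh and Duong--Yan, by viewing each square functional as a sublinear operator $S$ from $L^p(M)$ into $L^p(M; H)$, where $H = L^2((0,\infty), dt/t)$ is a Hilbert space. The $L^2$-boundedness is already given by Remark \ref{rem:holo} and equation (\ref{ass:square}), so it suffices to establish a weak-type $(1,1)$ estimate (from which $p \in (1,2)$ follows by Marcinkiewicz) and then handle the range $p \in (2,\infty)$ by duality, since the adjoint operator $L^*$ satisfies the same assumptions and its square function is dual to $S$ against $H$-valued pairings.

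For the weak-type $(1,1)$ estimate, I would perform the standard Calderón--Zygmund decomposition adapted to the doubling setting: at level $\lambda>0$, write $f = g + \sum_i b_i$, where each $b_i$ is supported in a ball $Q_i = B(x_i,r_i)$, has mean zero, $\|b_i\|_{L^1} \lesssim \lambda \mu(Q_i)$, and $\sum_i \mu(Q_i) \lesssim \|f\|_{L^1}/\lambda$. The good part is dispatched by $L^2$-boundedness and Chebyshev. The heart of the argument is the bound
$$ \sum_i \int_{M \setminus 2Q_i} \bigl\| \psi(tL) b_i(x) \bigr\|_H \, d\mu(x) \lesssim \|f\|_{L^1}. $$
I split the $t$-integration at $t = r_i^2$. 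For small times $t \leq r_i^2$, the pointwise kernel bound (\ref{eq:pointwise}), extended to $(tL)^k e^{-tL}$ via the Cauchy formula as noted in Remark \ref{rem:holo}, provides polynomial spatial decay of exponent $-d-2N-\delta$, which combined with (\ref{eq:N}) is summable across dyadic annuli around $Q_i$ and yields a net gain of order $(r_i/d(x,x_i))^{\delta}$ after the $t$-integration. For large times $t \geq r_i^2$, the mean-zero condition $\int b_i d\mu = 0$ is exploited: together with the Hölder-type regularity of the kernel in $y$ (obtained from the differentiated kernel estimate), it produces an extra factor $(r_i/\sqrt{t})^{\alpha}$ for some $\alpha>0$, again giving a convergent integral.

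For $p \in (2,\infty)$, duality against the conjugate semigroup $e^{-tL^*}$ (which inherits all the properties of Assumption \ref{ass:semigroup}) reduces the claim to the previous case. The gradient square function in (\ref{ass:square}) is then handled exactly as suggested in Remark \ref{rem:square}: using the factorisation
$$ t^{1/2} \nabla (tL)^k e^{-tL} = {\mathcal R} \cdot (tL)^{k+1/2} e^{-tL}, $$
the $L^p$-boundedness of the Riesz transform ${\mathcal R}$ from Assumption \ref{ass:semigroup} and its $H$-valued extension (a standard consequence of the scalar boundedness for Hilbert-space valued singular integrals) transfer the $L^p$-bound for the $\psi$-type square function with $\psi(z) = z^{k+1/2} e^{-z}$ to the gradient version.

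I expect the main obstacle to be the off-diagonal control of $\psi(tL)$ when $\psi$ is only a holomorphic function satisfying the growth condition of Remark \ref{rem:holo}, since the hypothesis (\ref{eq:pointwise}) supplies pointwise kernel bounds only for the semigroup itself. The standard remedy is to represent $\psi(tL)$ via a Dunford contour integral of resolvents, or equivalently via a reproducing formula of the form $\psi(tL) = \int_0^\infty \eta(s)(stL)^m e^{-stL} \, ds/s$ with an auxiliary profile $\eta$ decaying at $0$ and $\infty$; this transfers the polynomial decay of the semigroup kernel to $\psi(tL)$, at the cost of a careful double summation over the auxiliary scale $s$ and the dyadic annuli around $Q_i$. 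The $+\delta$ margin in the exponent of (\ref{eq:pointwise}) is precisely what guarantees absolute convergence of these nested sums.
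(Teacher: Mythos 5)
Your broad strategy (reduce to weak-$(1,1)$ via a Calder\'on--Zygmund decomposition adapted to the doubling setting, then Marcinkiewicz for $p<2$ and duality for $p>2$, with the gradient square function reduced via the Riesz transform exactly as in Remark \ref{rem:square}) is a different route from the paper's, which instead uses $L^2$--$L^2$ off-diagonal estimates for $T(I-e^{-tL})$ and a Blunck--Kunstmann/Auscher-type extrapolation theorem (cited from \cite{BZ}). Both belong to the same circle of ideas, but there is one genuine gap in your version.

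The problem is in Step~2 of your weak-$(1,1)$ argument, where you exploit the mean-zero condition of the bad pieces $b_i$ ``together with the H\"older-type regularity of the kernel in $y$ (obtained from the differentiated kernel estimate).'' The hypotheses of the paper give you no spatial regularity of the kernel at all: Assumption \ref{ass:semigroup} provides only the pointwise upper bound (\ref{eq:pointwise}), and Remark \ref{rem:holo} extends this, via the Cauchy formula in the complex time variable, to the kernels of $(tL)^k e^{-tL}$ --- i.e.\ it controls powers of $L$ applied to the semigroup, not derivatives $\nabla_y$ or H\"older differences of the kernel. So the classical CZ cancellation step, which subtracts the kernel at the center of $Q_i$, is not available. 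This is precisely the situation the Duong--McIntosh theory was built for: one replaces the vanishing-mean trick by testing $T$ against $(I - e^{-r_i^2 L})^M b_i$, keeping $T e^{-r_i^2 L} b_i$ with the good part $g$ in the $L^2$ estimate. That is, in effect, the same ``softened cancellation'' that appears in the paper's proof through the oscillation operator $B_t = I - e^{-tL}$. If you replace your appeal to kernel regularity with that device, your CZ decomposition argument closes, and the $+\delta$ margin in (\ref{eq:pointwise}) indeed provides the absolute convergence of the nested sums you mention at the end. Your reduction of the gradient square function via the $\ell^2$-valued Riesz-transform bound is correct and matches the paper.
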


\begin{proof} Let $T$ be one of the square functions in Remark \ref{rem:holo}. We also already know that it is $L^2$ bounded, by holomorphic functional calculus.
Then consider the ``oscillation operator'' at the scale $t$: 
$$B_t:=1-{\mathcal A}_t=1-e^{-tL}=-\int_0^tL e^{-sL} ds. $$
Then, by using differentiation of the semigroup, it is classical that $T B_t$ satisfies $L^2-L^2$ off-diagonal decay at the scale $t^{1/m}$, since the semigroup $e^{-tL}$ is bounded by Hardy Littlewood maximal function ${\mathcal M}_{s_-}$. So we can apply interpolation theory (see \cite{BZ} for a very general exposition of such arguments) and prove that $T$ is bounded on $L^p$ for every $p\in(s_-,2]$ (and then for $p\in[2,\infty)$ by applying a similar reasoning with the dual operators). \\
Then consider a square function $U$ of type (\ref{ass:square}). Then by using the Riesz transform, it yields
$$ U(f) =\left( \int_0^\infty \left| {\mathcal R} \psi(tL) f \right|^2 \frac{dt}{t} \right)^{1/2}$$
with $\psi(z)=z^{1/m}\phi(z)$. Since ${\mathcal R}$ is supposed to be $L^p$-bounded, it verifies $\ell^2$-valued inequalities and so the $L^p$-boundedness of $U$ is reduced to the one of a square functional of previous type, which was before proved.
\end{proof}

\subsection{Framework of sub-Laplacien}

We will only consider operators $L$ which are sub-Laplaciens and generating a semigroup $(e^{-tL})_{t>0}$ satisfying the above assumptions. Let us first explain what a sub-Laplacian means :

We assume that there exists $X=\{X_k\}_{k=1,...,\K}$ a finite family of real-valued vector fields (so $X_k$ is defined on $M$ and $X_k(x)\in TM_x$) such that
\be{L} L=-\sum_{k=1}^\K X_k^2.\ee
 We identify the ${X_k}$'s with the first order
differential operators acting on Lipschitz functions  defined on
$M$ by the formula
$$  X_kf(x)=X_k(x)\cdot \nabla f(x), $$
and we set $Xf=(X_1 f, X_2f,\cdots, X_\K f)$ and
$$  |Xf(x)|=\left(\sum_{k=1}^\K |X_k f(x)|^2\right)^{1/2}, \quad
x\in M.$$
We define also the higher-order differential operators as follows : for $I\subset \{1,...,\K\}^k$, we set
$$  X_I := \prod_{i\in I} X_{i}.$$
We assume the following:
\begin{ass} \label{ass:riesz}
For every subset $I$, the $I$th-local Riesz transform ${\mathcal R}_I:=X_I (1+L)^{-|I|/2}$ and its adjoint ${\mathcal R}_I^*:=(1+L)^{-|I|/2} X_I $ are bounded on $L^p$ for every $p\in(1,\infty)$.
\end{ass}

\begin{rem} It is easy to check that this last assumption is implied by the boundedness of each local-Riesz transform ${\mathcal R}_i$ and ${\mathcal R}^*_i$ in Sobolev spaces $W^{k,p}$ for every $p\in(1,\infty)$ and $k\in \N$. Indeed for $I=\{i_1,...,i_n\}$, we have
$$ \| {\mathcal R}_I^* f\|_{L^p} \leq \|{\mathcal R}_{i_1}^*(X_{i_2}...X_{i_n} f)\|_{W^{|I|-1,p}} \lesssim \|{\mathcal R}_{i_2,...,i_n}^* f\|_{L^p}.$$
Repeating this reasoning, we obtain that the Sobolev boundedness of the Riesz transforms and of its adjoint implies the previous Assumption.
\end{rem}

From now on, we will consider a doubling Riemannian manifold $M$ satisfying Poincar\'e inequality $(P_1)$, lower bound of the volume Assumption (\ref{ass:minoration}) and a structure of sub-Riemannian laplacian associated to a semigroup satisfying Assumption (\ref{ass:semigroup}) and with bounded Riesz transforms (Assumption (\ref{ass:riesz})).

\section{Examples of such situations}

In this section, we would like to give two examples of situations where all these assumptions are satisfied.

\subsection{Laplacian operators on Carnot-Caratheodory spaces}

Let $\Omega$ be an open connected subset of ${\mathbb R}^d$ and
$Y=\{Y_k\}_{k=1}^\kappa$ a family of real-valued, infinitely
differentiable vector fields. 

\begin{df} Let $\Omega$ and $Y$ be as above. $Y$ is said to satisfy H\"ormander's condition in
$\Omega$ if the family of commutators of vector fields in $Y$ ($Y_i$, $[Y_{i}, Y_{j}]$, ....) span ${\mathbb R}^d$ at every point of $\Omega.$
\end{df}

Suppose that $Y=\{Y_k\}_{k=1}^M$ satisfies H\"ormander's
condition in $\Omega.$  Let $C_Y$ be the family of absolutely
continuous curves $\zeta:[a,b]\to\Omega,$ $a\le b,$ such that
there exist measurable functions $c_j(t),$ $a\le t\le b,$
$j=1,\cdots, M,$ satisfying $\sum_{j=1}^{M} c_j(t)^2\le 1$ and
$\zeta'(t)=\sum_{j=1}^{M}c_j(t) Y_j(\zeta(t))$ for almost every
$t\in [a,b].$ If $x,\,y\in\Omega$ define
\[\rho(x,y)=\inf\{T>0:\text{ there exists }\zeta \in C_Y \text{ with } \zeta(0)=x \text{ and }\zeta(T)=1\}.\]
The function $\rho$ is in fact a metric in $\Omega$ called the
Carnot-Carath\'eodory metric associated to $Y.$  
This allows us to equipped the space $\Omega$ of a sub-Riemannian structure.

\subsection{Lie groups}

Let $M=G$ be a unimodular connected Lie group endowed with its Haar measure $d\mu=dx$ and assume that it has a polynomial volume growth. Recall that ``unimodular" means that $dx$ is both left-invariant and right-invariant. Denote by ${\mathcal L}$ the Lie algebra of $G$. Consider a family $X =
\{X\, ... ,X_\kappa\}$ of left-invariant vector fields on $G$ satisfying the H\"ormander condition, which
means that the Lie algebra generated by the $X_i$'s is ${\mathcal L}$. By ``left-invariant," one
means that, for any $g \in G$ and any  $f\in C^\infty_0(G)$, $X(\tau_g f)
= \tau_g(Xf)$, where $\tau_g$ is the left-translation operator.
As previously, we can build the Carnot-Carath\'eodory metric on $G$. The left
invariance of the $X_i$'s implies the left-invariance of the distance $d$. So that for every $r$, the volume of the ball $B(x,r)$ does not depend on $x\in G$ and also will be denoted $V(r)$.
It is well-known (see \cite{Gui,NSW}) that $(G,d)$ is then a space of homogeneous type. Particular case are Carnot groups, where the vector fields are given by a Jacobian basis of its Lie algebra and satisfy H\"ormander condition. In this situation, two cases may occur : either the manifold is doubling or the volume of the balls admit an exponential growth \cite{Gui}. For example, nilpotents Lie groups satisfies the doubling property (\cite{16}).

We refer the reader to \cite[Thm5.14]{Robinson} and \cite[Section 3, Appendix 1]{CRT} where properties of the heat semigroup are studied: in particular the heat semigroup $e^{-tL}$ satisfies gaussian upper-bounds and Assumption \ref{ass:semigroup} on the higher-order Riesz transforms (Assumption (\ref{ass:riesz}) is satisfied too.

\subsection{Carnot groups}

 Particular cases of nilpotents Lie groups are the Carnot groups (if it admits a stratification).
 A stratification on a Lie group $G$ (whose $\liea$ is its Lie algebra) is a collection of linear subspaces $V_1$, ..., $V_r$ of $G$ such that
$$ \liea = V_1 \oplus ... \oplus V_r$$
which satisfy $[V_1, V_i] = V_{i+1}$ for $i=1, ..., r-1$ and $[V_1, V_r] = 0$.
By $[V_1, V_i]$ we mean the subspace of $G$ generated by the elements $[X, Y]$
where $X \in V_1$ and $Y \in V_i$. Consider  $n_i$ the dimension of $V_i$, $n_1+\cdots+n_r=d$ and dilations $\{\delta_\lambda\}_{\lambda>0}$ of the form
\[\delta_\lambda(x)=(\lambda \,x^{(1)}, \lambda^2 x^{(2)},\cdots,\lambda^s x^{(r)}),\quad  x^{(i)}\in V_i.\]
The couple $\cg=(G,\delta_\lambda)$ is said to be a homogeneous Carnot group (of step $r$ and $n_1$ generators) if  $\delta_\lambda$ is an automorphism of $G$ for every $\lambda>0$ and if the first $n_1$ elements of the Jacobian basis of $\liea,$ say $Z_1,\cdots,Z_{n_1},$  satisfy
\begin{equation}\label{hormander}
\text{rank}(\text{Lie}[Z_1,\cdots,Z_{n_1}](x))=d,\qquad \text{for all }x\in G,
\end{equation}
where $\text{Lie}[Z_1,\cdots,Z_{n_1}]$ is the Lie algebra generated by the vector fields $Z_1,\cdots Z_{n_1}.$
The number $Q=\sum_{i=1}^r i\,n_i$ is called the homogeneous dimension of $\cg$. 

As for example the different Heisenberg groups, ${\mathcal H}^d$ is a Carnot group of dimension $Q = 2d + 2$. 
We refer the reader to \cite{GS} for an introduction of pseudodifferential operators in this context using a kind of Fourier transforms involving irreductible representations (and to \cite{BFG} for a complete work about pseudo-differential calculus on Heisenberg groups).

\subsection{Riemannian manifolds with a bounded geometry}

We shall say that a Riemannian manifold $M$ has a bounded geometry if 
\begin{itemize}
\item the curvature tensor and all its derivatives are bounded
\item Ricci curvature is bounded from below
\item and $M$ has a positive injectivity radius.
\end{itemize}

In such situations, we know that there exists a collection of smooth vector fields $X_1,...,X_\K$ such that 
$$ \Delta = -\sum_{i=1}^\K X_i^2.$$
Moreover Assumptions \ref{ass:semigroup} and \ref{ass:riesz} are satisfied (see \cite{CRT} and \cite{Triebel}).

\section{The scale of Sobolev spaces} \label{sec:sobo}

We use the Bessel-type Sobolev spaces, adapted to the operator $L$:
\begin{df} For $p\in(1,\infty)$ and $s\geq 0$, we set
$$ W^{s,p}=W^{s,p}_L:=\left\{f\in L^p,\  (1+L)^{s/2}(f) \in L^p \right\}.$$ 
\end{df}

First, we have this characterization:

\begin{prop} \label{prop:equivalence} For all $p\in (1,\infty)$ and $s>0$, we have the following equivalence
$$ \|f\|_{L^p} + \| L^{s/2}(f)\|_{L^p} \simeq \|(1+L)^{s/2} f \|_{L^p}.$$
\end{prop}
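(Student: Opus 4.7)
The plan is to reduce the equivalence to the $L^p$-boundedness of three bounded holomorphic multipliers of $L$, and then invoke the fact that, under Assumption \ref{ass:semigroup}, bounded holomorphic functions of $L$ extend from $L^2$ to $L^p$-bounded operators for all $p\in(1,\infty)$. Concretely, define on a sector $S_\nu^0$ the functions
\begin{align*}
\phi_1(\lambda) &= \frac{(1+\lambda)^{s/2}}{1+\lambda^{s/2}}, \qquad
\phi_2(\lambda) = \frac{\lambda^{s/2}}{(1+\lambda)^{s/2}}, \\
\phi_3(\lambda) &= \frac{1}{(1+\lambda)^{s/2}}.
\end{align*}
For $\nu>0$ small enough (depending on $s$) each $\phi_i$ is holomorphic on $S_\nu^0$ and bounded there: $\phi_1$ tends to $1$ at $0$ and at $\infty$, while $\phi_2$ and $\phi_3$ are clearly bounded, and one checks that the denominators $1+\lambda^{s/2}$ and $(1+\lambda)^{s/2}$ stay away from $0$ on such a sector.

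First, I would argue that each $\phi_i(L)$ is bounded on $L^p$ for every $p\in(1,\infty)$. On $L^2$ this is a direct consequence of the bounded $H_\infty$-calculus \eqref{eq:holocal}. The extension to $L^p$ is the standard Duong--McIntosh--Yan argument (the references \cite{DY1,DY,Mc} cited earlier in the paper): combining the $L^2$-boundedness of $\phi_i(L)$ with the pointwise kernel bounds \eqref{eq:pointwise} of the semigroup, one may represent $\phi_i(L)$ as a Calderón--Zygmund operator with a non-smooth kernel and apply the corresponding weak $(1,1)$ theory together with interpolation and duality. This is exactly the setting for which the method was designed.

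Once the three multipliers are known to be $L^p$-bounded, the equivalence follows by simple algebraic identities. From $(1+\lambda)^{s/2}=\phi_1(\lambda)\cdot(1+\lambda^{s/2})$ we get
\[
\|(1+L)^{s/2}f\|_{L^p}=\|\phi_1(L)(I+L^{s/2})f\|_{L^p}\lesssim \|f\|_{L^p}+\|L^{s/2}f\|_{L^p}.
\]
Conversely, from $1=\phi_3(\lambda)(1+\lambda)^{s/2}$ and $\lambda^{s/2}=\phi_2(\lambda)(1+\lambda)^{s/2}$ we obtain respectively
\[
\|f\|_{L^p}\lesssim \|(1+L)^{s/2}f\|_{L^p},\qquad \|L^{s/2}f\|_{L^p}\lesssim \|(1+L)^{s/2}f\|_{L^p},
\]
which together give the reverse inequality.

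The main obstacle is the $L^p$-extension step: Assumption \ref{ass:semigroup} only states bounded $H_\infty$-calculus on $L^2$, and one has to check that the non-smooth Calderón--Zygmund machinery indeed applies to functions like $\phi_i$ which do not decay at $0$ or at $\infty$. This is handled by splitting $\phi_i(L)=\phi_i(L)e^{-L}+\phi_i(L)(I-e^{-L})$, or equivalently by writing $\phi_i$ on a contour as in \cite{DY}, so that the required off-diagonal bounds reduce to those for $(tL)^ke^{-tL}$ already recorded in Remark \ref{rem:holo}. Modulo this (by now standard) verification, the proposition follows.
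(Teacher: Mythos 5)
Your approach is correct but genuinely different from the paper's. You package the equivalence as the $L^p$-boundedness of three bounded holomorphic multipliers $\phi_1,\phi_2,\phi_3$ of $L$ and then invoke an $L^p$ version of the $H_\infty$-calculus (a Hörmander/Duong--McIntosh--Yan spectral multiplier theorem). The algebraic reductions via $\phi_1(\lambda)(1+\lambda^{s/2})=(1+\lambda)^{s/2}$, $\phi_2(\lambda)(1+\lambda)^{s/2}=\lambda^{s/2}$ and $\phi_3(\lambda)(1+\lambda)^{s/2}=1$ are clean and correct, and the boundedness of the $\phi_i$ on a narrow sector is easy to check as you do. The paper, by contrast, never invokes (and never proves) a general spectral multiplier theorem on $L^p$: it writes $(1+L)^{s/2}$ and $L^{s/2}$ as explicit Bochner integrals of the semigroup, splits at $t=1$ or into a ``nice'' part and a ``square-function'' part, and then uses only two already-established facts: the uniform $L^p$-boundedness of $e^{-tL}$ (from the kernel bound and the maximal function) and the $L^p$-boundedness of the vertical square functions from Proposition \ref{prop:Lp}, closing the estimate by duality. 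This keeps the proof entirely within the tools the paper has set up.

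The point you correctly flag as the ``main obstacle'' is in fact the real issue with your route: Assumption \ref{ass:semigroup} gives only $L^2$-bounded $H_\infty$-calculus together with \emph{polynomial} (not Gaussian) pointwise decay of the kernel, and the paper nowhere records that this implies $\|b(L)\|_{L^p\to L^p}\lesssim\|b\|_{H_\infty(S_\nu^0)}$ for general bounded holomorphic $b$. One can indeed obtain it by writing $b(L)(I-e^{-r^2L})^M$ on a contour and deducing off-diagonal bounds from those of $(tL)^ke^{-tL}$, then running the Blunck--Kunstmann/\cite{BZ} Calder\'on--Zygmund machinery; but that is a nontrivial lemma to insert, and your fixed-scale splitting $\phi_i(L)e^{-L}+\phi_i(L)(I-e^{-L})$ by itself is not the right decomposition (you need the oscillation $(I-e^{-r^2L})^M$ at the scale $r$ of each CZ piece, not a fixed scale). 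So your argument is more modular and arguably more transparent once that multiplier theorem is available, whereas the paper's proof trades this generality for a self-contained computation using only the square-function estimates already proved. Both buy the same conclusion; yours requires first establishing the $L^p$ functional calculus, the paper's does not.
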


\begin{proof} Set $\alpha=s/2$ and write $\alpha=k+\theta$ with $k\in\N$ and $\theta\in[0,1)$. We decompose $(1+L)^{\alpha}$ with the semigroup as following
\begin{align*}
 (1+L)^{\alpha} f   & = \int_0^\infty e^{-t} e^{-tL} (1+L) t^{1-\theta} \frac{dt}{t} (1+L)^{k}(f) \\
  & = \int_0^\infty e^{-t} e^{-tL} t^{1-\theta} \frac{dt}{t} (1+L)^{k}(f) + \int_0^\infty e^{-t} e^{-tL} (tL)^{1-\theta} \frac{dt}{t} L^\theta (1+L)^{k}(f).
\end{align*}
The first integral-operator is easily bounded on $L^p$ since the semigroup $e^{-tL}$ is uniformly bounded. The second integral operator is bounded using duality:
\begin{align*}
 & \langle \int_0^\infty e^{-t} e^{-tL} (tL)^{1-\theta}( u) \frac{dt}{t},g \rangle  =  \int_0^\infty e^{-t} \langle e^{-tL/2} (tL)^{\frac{1-\theta}{2}}(u), e^{-tL^*/2} (tL^*)^{\frac{1-\theta}{2}} g \rangle \frac{dt}{t}  \\
 & \hspace{1cm} \leq \int \left(\int_0^\infty \left|e^{-tL/2} (tL)^{\frac{1-\theta}{2}}(u)\right|^2 \frac{dt}{t}\right)^{1/2} \left(\int_0^\infty \left|e^{-tL^*/2} (tL^*)^{\frac{1-\theta}{2}}(g)\right|^2 \frac{dt}{t}\right)^{1/2} d\mu.
\end{align*}
Since $(1-\alpha)/2>0$, then the two square functionals are bounded in $L^p$ and $L^{p'}$ (by Proposition \ref{prop:Lp}) and that concludes the proof of
$$ \|(1+L)^{\alpha} f \|_{L^p} \lesssim \|(1+L)^k f\|_{L^p} + \| L^{\theta}(1+L)^k(f)\|_{L^p}.$$
Then, developing $(1+L)^k$, it follows a finite sum of $\|L^z(f)\|_{L^p}$ with $z\in[0,\alpha]$.
We decompose
$$ L^z(f) = \int_0^\infty e^{-tL} (tL)^\alpha(f) t^{-z}\frac{dt}{t} = \int_0^1 e^{-tL} (tL)^\alpha(f) t^{-z}\frac{dt}{t}+ \int_1^\infty e^{-tL} (tL)^\alpha t^{-z}\frac{dt}{t}.$$
The first quantity in $L^p$ is controlled by $\|L^\alpha(f)\|_{L^p}$ and the second one by $\|f\|_{L^p}$, which concludes the proof of
$$  \|(1+L)^{s/2} f \|_{L^p} \lesssim \|f\|_{L^p} + \| L^{s/2}(f)\|_{L^p} .$$
Let us now check the reverse inequality. As previously, for $u=0$ or $u=\alpha$ we write
$$ L^u f = \int_0^\infty e^{-t(1+L)} (1+L) L^u t^{1+\alpha} \frac{dt}{t} (1+L)^\alpha f.$$
By producing similar arguments as above, the operator $\int_0^\infty e^{-t(1+L)} (1+L) L^u t^{1+\alpha} \frac{dt}{t}$ is easily bounded on $L^p$ (splitting the integral for $t\leq 1$ and $t\geq 1$) and we can also conclude to
$$ \|L^u(f)\|_{L^p} \lesssim \|(1+L)^\alpha f\|_{L^p},$$
which ends the proof.
\end{proof}

\begin{cor} For all $p\in (1,\infty)$ and $0\leq t\leq s$, we have the following inequality
$$ \|L^{t}f\|_{L^p} \lesssim \|(1+L)^{t} f \|_{L^p} \simeq \|f\|_{W^{2t,p}}.$$
 \end{cor}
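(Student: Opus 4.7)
The plan is to read the corollary as a direct consequence of Proposition \ref{prop:equivalence}. Apply that proposition with exponent $\alpha = t$ (the case $t=0$ being trivial, so assume $t>0$): it gives
$$\|f\|_{L^p} + \|L^t f\|_{L^p} \simeq \|(1+L)^t f\|_{L^p}.$$
Dropping the $\|f\|_{L^p}$ term on the left immediately yields the first inequality $\|L^t f\|_{L^p} \lesssim \|(1+L)^t f\|_{L^p}$.

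For the equivalence $\|(1+L)^t f\|_{L^p} \simeq \|f\|_{W^{2t,p}}$, note that by the very definition of $W^{2t,p}$ the natural norm is $\|f\|_{W^{2t,p}} := \|f\|_{L^p} + \|(1+L)^t f\|_{L^p}$, so the bound $\|(1+L)^t f\|_{L^p} \leq \|f\|_{W^{2t,p}}$ is automatic. The reverse amounts to $\|f\|_{L^p} \lesssim \|(1+L)^t f\|_{L^p}$, i.e., to the $L^p$-boundedness of the resolvent $(1+L)^{-t}$. I would establish this via the subordination identity
$$(1+L)^{-t} = \frac{1}{\Gamma(t)} \int_0^\infty s^{t-1} e^{-s} e^{-sL}\, ds,$$
using that the semigroup $e^{-sL}$ is uniformly $L^p$-bounded (a direct consequence of the pointwise kernel estimate (\ref{eq:pointwise}) and Theorem \ref{MIT}); the factor $e^{-s}$ gives integrability at infinity and $s^{t-1}$ is integrable near zero.

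I do not foresee any real obstacle: the corollary is essentially a repackaging of Proposition \ref{prop:equivalence}, the only extra ingredient being the standard $L^p$-boundedness of $(1+L)^{-t}$, which follows from the subordination formula and the uniform $L^p$-boundedness of the heat semigroup already available under Assumption \ref{ass:semigroup}.
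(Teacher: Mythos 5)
Your proof is correct and matches the (implicit) approach of the paper, which states this corollary without proof as an immediate consequence of Proposition \ref{prop:equivalence}. You've supplied exactly the routine details one would expect: specializing the proposition to $\alpha=t$, dropping the $\|f\|_{L^p}$ term, and justifying the $L^p$-boundedness of $(1+L)^{-t}$ via the subordination formula together with the uniform $L^p$-boundedness of the semigroup.
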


Let us then describe classical Sobolev embeddings in this setting (see \cite{BBR} for a more general framework):

\begin{prop} \label{prop:Sobolev} Under Assumption \ref{ass:minoration} (lower bound on the ball-volumes), let $s\geq t\geq 0$ be fixed and take $p\leq q$ such that
$$ \frac{1}{q}-\frac{t}{d} > \frac{1}{p}-\frac{s}{d}.$$
Then, we have the continuous embedding
$$ W^{s,p} \hookrightarrow W^{t,q}.$$
\end{prop}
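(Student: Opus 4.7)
By the characterization in Proposition \ref{prop:equivalence} and the definition of $W^{s,p}$, it suffices to show that the operator $(1+L)^{-\alpha}$, with $\alpha=(s-t)/2$, maps $L^p$ continuously into $L^q$. Indeed, writing $g=(1+L)^{s/2}f\in L^p$, the Sobolev embedding $\|f\|_{W^{t,q}}\simeq \|(1+L)^{t/2}f\|_{L^q}=\|(1+L)^{-\alpha}g\|_{L^q}$ would then follow from $\|(1+L)^{-\alpha}g\|_{L^q}\lesssim\|g\|_{L^p}$. The hypothesis $\frac{1}{q}-\frac{t}{d}>\frac{1}{p}-\frac{s}{d}$ is precisely the strict inequality $2\alpha>d(\tfrac{1}{p}-\tfrac{1}{q})$, which will be the convergence condition for the time integral below.

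The plan is to use the subordination representation
\[ (1+L)^{-\alpha}=\frac{1}{\Gamma(\alpha)}\int_0^{\infty} e^{-r(1+L)}\,r^{\alpha-1}\,dr =\frac{1}{\Gamma(\alpha)}\int_0^{\infty} e^{-r}e^{-rL}\,r^{\alpha-1}\,dr \]
and to split the integral at $r=1$. For the tail $r\geq 1$ the factor $e^{-r}$ gives exponential decay in $r$ while $e^{-rL}$ is uniformly bounded on every $L^p$ (it is even bounded pointwise by the Hardy--Littlewood maximal function thanks to Assumption \ref{ass:semigroup}), so this piece is trivially bounded from $L^p$ to $L^p$ and hence, by H\"older on balls of size $1$ combined with Assumption \ref{ass:minoration}, from $L^p$ to $L^q$.

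The core estimate is thus the $L^p\to L^q$ norm of $e^{-rL}$ for $0<r\leq 1$. Here I would combine the pointwise kernel bound \eqref{eq:pointwise},
\[ |a_r(x,y)|\lesssim \frac{1}{\mu(B(x,\sqrt r))}\Bigl(1+\tfrac{d(x,y)}{\sqrt r}\Bigr)^{-d-2N-\delta}, \]
with the lower volume bound $(MV_d)$ from Assumption \ref{ass:minoration}, which gives $\mu(B(x,\sqrt r))\gtrsim r^{d/2}$ when $\sqrt r\leq 1$. A Schur/Young-type argument (using \eqref{eq:N} to transfer volumes from $x$ to $y$ along the decay factor, and noting that $\|a_r(x,\cdot)\|_{L^1}\lesssim 1$ and $\|a_r(x,\cdot)\|_{L^\infty}\lesssim r^{-d/2}$) yields
\[ \|e^{-rL}\|_{L^p\to L^q}\lesssim r^{-\frac{d}{2}\left(\frac{1}{p}-\frac{1}{q}\right)},\qquad 0<r\leq 1. \]
Multiplying by $r^{\alpha-1}$ and integrating, the small-time contribution is bounded by $\int_0^1 r^{\alpha-1-\frac{d}{2}(\frac{1}{p}-\frac{1}{q})}\,dr$, which is finite exactly under the assumed strict inequality $2\alpha>d(\tfrac{1}{p}-\tfrac{1}{q})$. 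Summing the two contributions gives the desired continuous inclusion.

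The main obstacle is the second step: carefully deducing the $L^p\to L^q$ bound for $e^{-rL}$ with the correct power of $r$ in a non-Euclidean doubling setting where the volume lower bound is only local. The exponent $d$ appearing in $(MV_d)$ must match the one appearing in \eqref{eq:pointwise}; the parameter $N$ in \eqref{eq:N} is absorbed thanks to the extra $-2N$ in the decay in Assumption \ref{ass:semigroup}, which is precisely why $\delta>1$ is assumed. Once this quantitative off-diagonal estimate is established, the rest is bookkeeping on the Gamma integral.
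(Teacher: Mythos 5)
Your proposal follows essentially the same route the paper sketches: the paper cites \cite[Prop.\,3.3]{BBR} and indicates the argument is to ``write the resolvant with the semigroup and then use the off-diagonal estimates (here the pointwise estimates on the heat kernel)'' --- exactly your subordination formula for $(1+L)^{-\alpha}$, the kernel bound \eqref{eq:pointwise}, and the lower volume bound $(MV_d)$. Your bookkeeping on the small-time piece is correct: with $\alpha=(s-t)/2$ the Gamma integral over $r\in(0,1]$ converges precisely when $2\alpha>d(\tfrac1p-\tfrac1q)$, which is the hypothesis.

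One step, however, is not justified as you wrote it. You claim the tail $\int_1^\infty$ is handled because $e^{-rL}$ is uniformly bounded $L^p\to L^p$, ``and hence, by H\"older on balls of size $1$ combined with Assumption \ref{ass:minoration}, from $L^p$ to $L^q$.'' This implication is false: on a manifold of infinite measure there is no inclusion $L^p\subset L^q$ for $q>p$, locally or globally, and H\"older on unit balls goes in the wrong direction. The $L^p\to L^p$ bound alone gives nothing about $L^p\to L^q$. The correct argument for the tail is the same kernel mechanism as for $r\leq 1$: for $r\geq 1$ one has $\mu(B(x,\sqrt r))\geq\mu(B(x,1))\gtrsim1$, so \eqref{eq:pointwise} gives $\|a_r(x,\cdot)\|_{L^\infty}\lesssim1$ and, as always, $\|a_r(x,\cdot)\|_{L^1}\lesssim1$ (and symmetrically in the other variable via \eqref{eq:N} and the $-2N$ in the decay), whence $\|e^{-rL}\|_{L^p\to L^q}\lesssim1$ \emph{uniformly} in $r\geq 1$; the exponential factor $e^{-r}$ then makes $\int_1^\infty e^{-r}r^{\alpha-1}\,dr$ finite. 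In other words, the tail estimate is not a consequence of $L^p$-boundedness plus a local volume bound, but the degenerate $r\geq 1$ case of the same Schur/Young estimate you set up for the small-time part.
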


We refer the reader to \cite[Proposition 3.3]{BBR} for a precise proof. The proof is based on a spectral decomposition, to write the resolvant with the semigroup and then to use the off-diagonal estimates (here the pointwise estimates on the heat kernel).

\begin{cor} \label{cor:Li} Under the previous assumption, $ W^{s,p} \hookrightarrow L^\infty$ as soon as 
$$ s>\frac{d}{p}.$$
\end{cor}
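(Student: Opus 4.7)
The plan is to use the Bessel-potential representation provided by Proposition \ref{prop:equivalence} and bound the kernel of $(1+L)^{-s/2}$ uniformly in $L^{p'}$. Write $f = (1+L)^{-s/2} g$ with $g = (1+L)^{s/2} f \in L^p$ and $\|g\|_{L^p} \simeq \|f\|_{W^{s,p}}$. The subordination formula
$$(1+L)^{-s/2} = \frac{1}{\Gamma(s/2)} \int_0^\infty t^{s/2-1} e^{-t} e^{-tL} \, dt$$
exhibits $(1+L)^{-s/2}$ as an integral operator with kernel
$$K(x,y) = \frac{1}{\Gamma(s/2)} \int_0^\infty t^{s/2-1} e^{-t} a_t(x,y) \, dt.$$
By H\"older's inequality, $|f(x)| \leq \|K(x,\cdot)\|_{L^{p'}} \|g\|_{L^p}$, so it suffices to prove $\sup_x \|K(x,\cdot)\|_{L^{p'}} < \infty$.

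To estimate $\|K(x,\cdot)\|_{L^{p'}}$, I would apply Minkowski's inequality to pull the $L^{p'}_y$-norm inside the $t$-integral and plug in the pointwise bound (\ref{eq:pointwise}) on $a_t$. A standard annular decomposition of $M$ based at $x$, combined with the doubling property (\ref{eq:d}), yields
$$\left\| \left(1 + \tfrac{d(x,\cdot)}{\sqrt{t}}\right)^{-d-2N-\delta} \right\|_{L^{p'}} \lesssim \mu(B(x,\sqrt{t}))^{1/p'},$$
where convergence of the annular sum is granted because $(d+2N+\delta)p' > d$. This reduces matters to verifying
$$\int_0^\infty t^{s/2-1} e^{-t} \, \mu(B(x,\sqrt{t}))^{-1/p} \, dt \,< \,\infty$$
uniformly in $x$.

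Finally, split this last integral at $t=1$. For $t \geq 1$, the factor $e^{-t}$ and the uniform bound $\mu(B(x,\sqrt{t})) \geq \mu(B(x,1)) \gtrsim 1$ (Assumption \ref{ass:minoration}) make the tail trivially finite. For $t \leq 1$, the lower volume bound $(MV_d)$ gives $\mu(B(x,\sqrt{t})) \gtrsim t^{d/2}$, so the integrand is $\lesssim t^{s/2 - 1 - d/(2p)}$, which is integrable near $0$ \emph{exactly} when $s > d/p$. This yields the required uniform estimate and hence $\|f\|_{L^\infty} \lesssim \|f\|_{W^{s,p}}$. The only nontrivial obstacle is the kernel bound; within it, the genuinely sharp step is the small-$t$ integrability, where the hypothesis $s > d/p$ is consumed via the lower ball-volume bound of Assumption \ref{ass:minoration}.
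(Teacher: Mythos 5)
Your proof is correct and implements exactly the strategy the paper attributes to \cite{BBR} for Proposition \ref{prop:Sobolev} --- the subordination formula writing $(1+L)^{-s/2}$ against the heat semigroup, the pointwise kernel bound (\ref{eq:pointwise}) together with doubling to control $\|K(x,\cdot)\|_{L^{p'}}$, and the lower volume bound of Assumption \ref{ass:minoration} to make the small-$t$ integral converge precisely when $s>d/p$ --- specialized to the $L^\infty$ endpoint. The paper itself states Corollary \ref{cor:Li} as an immediate consequence of Proposition \ref{prop:Sobolev} (take $t=0$, $q=\infty$) and defers the underlying argument to \cite{BBR}, so your argument simply spells out that deferred proof in the endpoint case.
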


We now recall a result of \cite{BBR}, where a characterization of Sobolev spaces is obtained, involving some fractional functionals. 

\begin{prop} \cite[Thm 5.2]{BBR} \label{prop:cara} Under Poincar\'e inequality $(P_1)$, for $s\in(0,1)$ we have the following characterization : a function $f\in L^p$ belongs to $W^{s,p}$ if and only if
$$ S_{s}^{\rho,loc} f(x)=\left(\int_{0}^{1} \left[\frac{1}{r^{s}} \left(\frac{1}{\mu(B(x,r))}\int_{B(x,r)}|f(y)-f(x)|^\rho d\mu(y)\right)^{1/\rho}\right]^2\frac{dr}{r}\right)^{\frac{1}{2}}
$$
belong to $L^p$, for some $\rho < \min(2,p)$.
\end{prop}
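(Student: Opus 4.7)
\emph{Plan.} The strategy is to prove each inclusion via a semigroup reproducing formula, reducing both $\|L^{s/2}f\|_{L^p}$ and $\|S_{s}^{\rho,loc}f\|_{L^p}$ to comparable vertical square functionals. The pointwise heat-kernel estimate (\ref{eq:pointwise}), the Poincar\'e inequality $(P_1)$, and the square-function bounds of Proposition \ref{prop:Lp} do the heavy lifting.

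For the necessity, set $g:=L^{s/2}f$ and reconstruct $f$ via
$$ f=L^{-s/2}g=\frac{1}{\Gamma(s/2)}\int_0^\infty t^{s/2}e^{-tL}g\,\frac{dt}{t}.$$
For $y\in B(x,r)$ with $r\le 1$, split the integral at $t=r^2$: on the range $t\ge r^2$, the gradient bound (\ref{ass:square}) together with Assumption \ref{ass:riesz} gives a H\"older-type modulus of continuity of $e^{-tL}g$ in its spatial variable, producing a favourable factor $(d(x,y)/\sqrt{t})^{\alpha}$; on the range $t\le r^2$, dominate $|e^{-tL}g|$ by $\M_\rho g$ via (\ref{eq:pointwise}). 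Taking the $L^\rho$-average in $y\in B(x,r)$, dividing by $r^s$, squaring and integrating $dr/r$ over $(0,1]$ yields a pointwise bound
$$ S_s^{\rho,loc}f(x)\lesssim \Bigl(\int_0^\infty|\phi(tL)g(x)|^2\,\tfrac{dt}{t}\Bigr)^{1/2}+\M_\rho g(x),$$
for a suitable $\phi$ of the class appearing in Remark \ref{rem:holo}. Taking $L^p$-norms and invoking Proposition \ref{prop:Lp} together with Theorem \ref{MIT} closes the step (this is where $\rho<p$ is used).

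For the sufficiency, use
$$ L^{s/2}f(x)=c_s\int_0^\infty t^{-s/2}(tL)e^{-tL}f(x)\,\frac{dt}{t},$$
truncated at $t=1$ so that the tail is absorbed into $\|f\|_{L^p}$. Because the kernel $K_t$ of $(tL)e^{-tL}$ has vanishing integral in the second variable (Remark \ref{rem:holo}) and satisfies (\ref{eq:pointwise}),
$$(tL)e^{-tL}f(x)=\int K_t(x,y)\bigl(f(y)-f(x)\bigr)\,d\mu(y).$$
A dyadic annular decomposition in $y$ and H\"older with exponent $\rho$ give
$$|(tL)e^{-tL}f(x)|\lesssim\sum_{k\ge 0}2^{-k\delta}\Bigl(\aver{B(x,2^k\sqrt{t})}|f-f(x)|^\rho\,d\mu\Bigr)^{1/\rho}.$$
Integrating against $t^{-s}\,dt/t$, Minkowski's inequality (legitimate because $\rho<2$) lets one swap the $L^\rho$-average with the $L^2(dt/t)$-norm; the change of variables $r=2^k\sqrt{t}$ then reassembles $S_s^{\rho,loc}f$ at the price of a geometric series summable because $\delta>0$.

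The main obstacle is the simultaneous enforcement of the three constraints on $\rho$. The lower bound $\rho\ge 1$ is needed so that $(P_1)$ upgrades to $(P_\rho)$ in the necessity step; $\rho<2$ enables the Minkowski swap between $L^\rho$-averages and the $L^2(dt/t)$-integration in the sufficiency step; and $\rho<p$ is what makes $\M_\rho$ bounded on $L^p$ (Theorem \ref{MIT}). The local cut-off at $r=1$ in $S_s^{\rho,loc}$ forces an additive $\|f\|_{L^p}$ tail in each direction, reconciled with the definition of $W^{s,p}$ via Proposition \ref{prop:equivalence}; this is why the statement is a norm equivalence rather than a clean one-sided bound.
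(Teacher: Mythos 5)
The paper does not actually prove this proposition; it cites it directly as \cite[Thm 5.2]{BBR}, so there is no ``paper's own proof'' to compare against. Your sketch is in the same spirit as the argument in that reference (semigroup reproducing formula, dyadic annular decomposition, reduction to vertical square functions), but it has one substantive gap.

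In the necessity direction you write that, for $t\ge r^2$, ``the gradient bound (\ref{ass:square}) together with Assumption \ref{ass:riesz} gives a H\"older-type modulus of continuity of $e^{-tL}g$ in its spatial variable, producing a favourable factor $(d(x,y)/\sqrt t)^\alpha$.'' This does not follow from the stated hypotheses. Both (\ref{ass:square}) and Assumption \ref{ass:riesz} are $L^p$-level statements (square-function and Riesz-transform boundedness); neither supplies a pointwise gradient bound on the heat kernel, and without something like a Gaussian gradient estimate $|\nabla_x a_t(x,y)|\lesssim t^{-1/2}\mu(B(x,\sqrt t))^{-1}\exp(-cd(x,y)^2/t)$ there is no pointwise H\"older modulus for $e^{-tL}g$ to invoke. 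The mechanism that actually makes this step work — and the reason $(P_1)$ appears as a hypothesis at all — is the Poincar\'e inequality: one passes from an $L^\rho$-average of $\nabla e^{-tL}g$ over $B(x,r)$ (which \emph{is} controllable by $\M_\rho$, the Riesz transform and a square function) to the oscillation average $\bigl(\aver{B(x,r)}|e^{-tL}g-(e^{-tL}g)_{B(x,r)}|^\rho\bigr)^{1/\rho}$, and then one still has to bridge from the ball average $(e^{-tL}g)_{B(x,r)}$ to the pointwise value $e^{-tL}g(x)$ via a telescoping-in-$t$ argument. Your write-up relegates Poincar\'e to a side remark about the constraint $\rho\ge 1$, but it is the load-bearing ingredient of the whole necessity direction; without making that explicit the ``favourable factor $(d(x,y)/\sqrt t)^\alpha$'' has no justification under the paper's assumptions.

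The sufficiency direction as you describe it (vanishing integral of the kernel of $tLe^{-tL}$, dyadic annuli, Minkowski swap legitimate because $\rho\le 2$, change of variables $r=2^k\sqrt t$) is essentially correct and does not need Poincar\'e, which matches the structure of the cited result. The accounting of the three roles of $\rho$ is also right, modulo the caveat above that the necessity direction uses Poincar\'e for the oscillation step itself, not merely to upgrade $(P_1)$ to $(P_\rho)$.
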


This characterization can be extended for $s>1$, using the sub-Laplacian structure. Indeed, we have this first Lemma:
\begin{lem} For every integer $k$ and $p\in(1,\infty)$, 
$$ \|f\|_{W^{k,p}} \simeq \sum_{I\subset \{1,...,\K\}^k} \|X_I(f)\|_{L^p}.$$
\end{lem}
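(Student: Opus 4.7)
The plan is to prove both directions of the equivalence, interpreting the sum as being over multi-indices $I\in\bigcup_{j=0}^{k}\{1,\dots,\K\}^{j}$ with the convention $X_\emptyset f=f$ (so that the $\|f\|_{L^p}$ term is included on the right-hand side).

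For the \emph{forward direction}, fix a multi-index $I$ with $|I|\leq k$. By Assumption~\ref{ass:riesz}, the local Riesz transform $\mathcal{R}_I=X_I(1+L)^{-|I|/2}$ is $L^p$-bounded, so
\[
\|X_If\|_{L^p}=\|\mathcal{R}_I(1+L)^{|I|/2}f\|_{L^p}\lesssim \|(1+L)^{|I|/2}f\|_{L^p}\lesssim \|(1+L)^{k/2}f\|_{L^p}=\|f\|_{W^{k,p}},
\]
where the second inequality uses that $(1+L)^{(|I|-k)/2}$ is $L^p$-bounded as a negative fractional power of $1+L$, representable via the uniformly $L^p$-bounded semigroup.

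For the \emph{reverse direction}, Proposition~\ref{prop:equivalence} reduces the task to dominating $\|L^{k/2}f\|_{L^p}$ by $\sum_{|I|=k}\|X_If\|_{L^p}$. If $k=2m$ is even, the sub-Laplacian identity $L=-\sum_j X_j^2$ yields
\[
L^m f=(-1)^m\!\!\sum_{j_1,\dots,j_m=1}^{\K}\!\!X_{j_1}^{2}\cdots X_{j_m}^{2}f,
\]
a finite sum of terms of the form $X_If$ with $|I|=2m=k$, and the triangle inequality concludes. If $k=2m+1$ is odd, factor $L^{k/2}=L^{1/2}L^{m}$ and establish the auxiliary bound
\[
\|L^{1/2}g\|_{L^p}\lesssim \sum_{j=1}^{\K}\|X_j g\|_{L^p}
\]
by writing $L^{1/2}g=L^{-1/2}Lg=-\sum_{j}(L^{-1/2}X_j)(X_j g)$; here each $L^{-1/2}X_j$ is $L^p$-bounded as the dual of the global Riesz transform $X_jL^{-1/2}$, whose $L^{p'}$-boundedness is part of Assumption~\ref{ass:semigroup}. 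Applying this with $g=L^m f$ and expanding $L^m$ as in the even case produces the claimed bound.

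The main obstacle is the odd case: $L^{1/2}$ is not a differential operator, so its control by the first-order vector fields $X_j$ must go through the Riesz transform together with a duality argument. The formal manipulations need to be justified by density (on $C^\infty_0(M)$, say) before extending to all $f\in W^{k,p}$ by continuity, which requires some care regarding the domains of the unbounded operators $L^{\pm 1/2}$.
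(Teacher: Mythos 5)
Your forward direction and the even--$k$ branch of the reverse direction are sound, and the overall strategy (local Riesz transforms one way, the sub-Laplacian expansion $L=-\sum X_j^2$ the other way) is the same one the paper uses. But the odd--$k$ branch has a genuine gap. You factor $L^{1/2}g=L^{-1/2}Lg=-\sum_j(L^{-1/2}X_j)(X_jg)$ and claim each $L^{-1/2}X_j$ is $L^p$-bounded ``as the dual of the global Riesz transform $X_jL^{-1/2}$, whose $L^{p'}$-boundedness is part of Assumption~\ref{ass:riesz}.'' That is not what is assumed: the Riesz transform hypothesis in Assumption~\ref{ass:semigroup} concerns $\nabla L^{-1/2}$ with the \emph{Riemannian} gradient $\nabla$, not the sub-Riemannian vector fields $X_j$; and the vector-field Riesz transforms assumed in Assumption~\ref{ass:riesz} are the \emph{local} ones ${\mathcal R}_I=X_I(1+L)^{-|I|/2}$ and their adjoints ${\mathcal R}_I^*=(1+L)^{-|I|/2}X_I$. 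The difference matters: $L^{-1/2}X_j$ and $(1+L)^{-1/2}X_j$ are not interchangeable on $L^p$, because $L^{-1/2}(1+L)^{1/2}$ is not $L^p$-bounded (it corresponds to the spectral multiplier $\lambda^{-1/2}(1+\lambda)^{1/2}$, which blows up near $\lambda=0$). So the auxiliary inequality $\|L^{1/2}g\|_{L^p}\lesssim\sum_j\|X_jg\|_{L^p}$ does not follow from the stated hypotheses and is false in general.

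The paper avoids this entirely by never passing through homogeneous half-powers of $L$: it works with $(1+L)^{1/2}$ directly via
$$
(1+L)^{1/2}=(1+L)^{-1/2}(1+L)=(1+L)^{-1/2}+\sum_{i=1}^{\K}(1+L)^{-1/2}X_i^2=(1+L)^{-1/2}+\sum_{i=1}^{\K}{\mathcal R}_i^*X_i,
$$
where every factor is $L^p$-bounded by Assumption~\ref{ass:riesz}, and then iterates for $k\geq 2$ by making appear higher-order ${\mathcal R}_I^*$. If you want to keep your reduction through Proposition~\ref{prop:equivalence}, you can do so for the even case, but for the odd case you should replace the $L^{\pm1/2}$ manipulations by the corresponding $(1+L)^{\pm1/2}$ identity above (no even/odd split is then really needed). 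Also, you should state explicitly that the sub-Laplacian $L=-\sum X_j^2$ is taken with the $X_j$ formally skew-adjoint, so that $(X_jL^{-1/2})^*=-L^{-1/2}X_j$; without that, the identification of $L^{-1/2}X_j$ as the adjoint of a Riesz transform is not even formally correct.
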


\begin{proof} As point out in \cite{CRT}, this is consequence of Assumption \ref{ass:riesz} about the local Riesz transforms. Indeed, for $k\geq 1$ and $I$ a subset, we have assumed that the $I$-th Riesz transform ${\mathcal R}_I$ are bounded on $L^p$, which is equivalent to
$$ \|X_I(f)\|_{L^p} \lesssim \|f\|_{W^{|I|/2,p}}.$$
Moreover, making appear the adjoint of the Riesz transforms and the resolvant (which are all bounded on $L^p$) as follows
$$ (1+L)^{1/2}= (1+L)^{-1/2} (1+L) = (1+L)^{-1/2} + \sum_{i=1}^{\K} (1+L)^{-1/2} X_i^2 = (1+L)^{-1/2} + \sum_{i=1}^{\K} {\mathcal R}_i^* X_i, $$
we conclude to the reverse inequality and so we have proved the desired result for $k=1$.
We let the details for $k\geq 2$ to the reader, the reasoning is exactly the same but technically to be written (we make appear a finite sum of higher order Riesz transforms ...)
\end{proof}

We also deduce the following characterization (see Proposition 19 in \cite{CRT}):

\begin{prop} Let $s:=k+t>1$ (with $k$ an integer and $t \in(0,1)$), then
\begin{align*}
 f\in W^{s,p} &   \Longleftrightarrow f\in L^p \textrm{  and  } \forall I \subset \{1,...,\K \}^k,\  X_I(f) \in W^{t,p} \\
 & \Longleftrightarrow f\in L^p \textrm{ and }  \forall I \subset \{1,...,\K \}^k, \  S_{t}^\rho(X_I(f)) \in L^p. 
\end{align*}
\end{prop}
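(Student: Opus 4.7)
The proposition contains two equivalences. The second one (between $X_I(f)\in W^{t,p}$ and $S_t^\rho(X_I(f))\in L^p$) is immediate from Proposition~\ref{prop:cara} applied to $g = X_I(f)$, since $t\in(0,1)$ and the auxiliary membership $X_I(f)\in L^p$ follows either from $f\in L^p$ combined with the first equivalence, or from $f\in W^{s,p}\hookrightarrow W^{k,p}$ together with the previous lemma. Thus only the first equivalence requires genuine work.

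For the first equivalence I would iterate $k$ times the special case
\[
\|g\|_{W^{\sigma,p}} \simeq \|g\|_{L^p} + \sum_{i=1}^{\K} \|X_i g\|_{W^{\sigma-1,p}}, \qquad \sigma>1.
\]
Starting from $\sigma = s$ and $g = f$, each application strips off one derivative, so after $k$ applications the condition $f\in W^{s,p}$ is replaced by the conjunction of $f\in L^p$ and $X_I(f)\in W^{s-k,p} = W^{t,p}$ for every $|I|=k$; reversing the steps gives the converse direction.

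To establish the base step I would imitate the proof of the previous lemma via the algebraic identity
\[
(1+L)^{1/2} = (1+L)^{-1/2} + \sum_{i=1}^{\K} \mathcal{R}_i^{*}\, X_i.
\]
Composing on the left with $(1+L)^{(\sigma-1)/2}$ and invoking the holomorphic functional calculus (Remark~\ref{rem:holo}), one rewrites $(1+L)^{\sigma/2} f$ as the sum of the lower-order term $(1+L)^{(\sigma-2)/2} f$ (easily absorbed by the $L^p$- and $W^{s,p}$-norms of $f$) together with terms of the form $(1+L)^{(\sigma-1)/2}\mathcal{R}_i^{*} X_i f$, each of which must be bounded in $L^p$ by $\|X_i f\|_{W^{\sigma-1,p}}$. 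The converse direction is analogous, using $X_i = \mathcal{R}_i (1+L)^{1/2}$ to re-express $(1+L)^{(\sigma-1)/2} X_i f$ in terms of $(1+L)^{\sigma/2} f$.

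The main technical obstacle is the lack of commutation between fractional powers of $(1+L)$ and the vector fields $X_i$ (equivalently, the Riesz transforms $\mathcal{R}_i$ and $\mathcal{R}_i^{*}$). The real content reduces to the Sobolev-space boundedness of the local Riesz transforms, namely that the conjugated operator $(1+L)^{\alpha}\mathcal{R}_i (1+L)^{-\alpha}$ is bounded on $L^p$ for every real $\alpha$. In the semigroup framework developed here this is a commutator estimate that can be deduced from the pointwise kernel estimate~\eqref{eq:pointwise}, Assumption~\ref{ass:riesz} on the higher-order Riesz transforms, and the bounded holomorphic calculus; it is precisely the content of \cite[Prop.~19]{CRT}, which can be invoked to conclude.
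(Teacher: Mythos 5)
The paper offers no argument here at all: immediately after the integer-order lemma it writes ``We also deduce the following characterization (see Proposition~19 in \cite{CRT})'' and moves on, so the proposition is given purely by reference. Against that background your proposal is more detailed than the paper, and the overall strategy --- split the two equivalences, dispose of the second via Proposition~\ref{prop:cara} applied to $X_I(f)$, then handle the first by stripping off one $X_i$ at a time using $(1+L)^{1/2}=(1+L)^{-1/2}+\sum_i \mathcal{R}_i^{*}X_i$ --- is the natural route and matches what \cite{CRT} actually does. Since in the end you invoke \cite{CRT} for the unavoidable technical core, the approach is essentially the same as the paper's.

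Two caveats, neither of which invalidates the plan but both worth noting. First, iterating the one-step equivalence literally does not reproduce the statement of the proposition: after $k$ applications one is left with $\|f\|_{L^p}+\sum_{j=1}^{k-1}\sum_{|J|=j}\|X_J f\|_{L^p}+\sum_{|I|=k}\|X_I f\|_{W^{t,p}}$, so the intermediate terms $\|X_J f\|_{L^p}$ with $0<|J|<k$ must be shown to be controlled by the two endpoint quantities (an interpolation/Gagliardo--Nirenberg-type step). This is standard but you should flag it. Second, the sentence ``it is precisely the content of \cite[Prop.~19]{CRT}'' slightly misattributes: as the paper cites it, Proposition~19 of \cite{CRT} \emph{is} the present characterization, not the auxiliary commutator estimate $(1+L)^{\alpha}\mathcal{R}_i(1+L)^{-\alpha}\in\mathcal{L}(L^p)$; the commutator fact is an ingredient of Coulhon--Russ--Tardivel's proof, not its statement. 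More importantly, that commutator bound is genuinely stronger than Assumption~\ref{ass:riesz} as stated (which only posits $L^p$-boundedness of $\mathcal{R}_I$ and $\mathcal{R}_I^*$), and the paper's remark after Assumption~\ref{ass:riesz} only goes one way (Sobolev boundedness implies the assumption, not conversely). So the claim that the commutator estimate ``can be deduced'' from the paper's listed hypotheses is not justified and should either be stated as an additional hypothesis or proved outright; as it stands this is the genuine gap in your write-up, though the paper itself leaves it equally implicit by citing \cite{CRT}.
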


We also deduce the following chain rule (see Theorem 22 in \cite{CRT} for a proof by induction on $k$):

\begin{prop} \label{prop:compoSobolev}
 If $F\in C^\infty$ with $F(0)=0$ and let $s:=k+t>\frac{d}{p}$ (with $p\in(1,\infty)$, $k$ an integer and $t \in(0,1)$).
 Then 
 $$ \|F(f)\|_{W^{s,p}} \lesssim \|f\|_{W^{s,p}} + \|f\|_{W^{s,p}}^k.$$
 If $F(0)\neq 0$, we still have such inequalities with localized Sobolev spaces.
\end{prop}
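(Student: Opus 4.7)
I would prove this by induction on the integer part $k$, leveraging the vector-field characterization of $W^{s,p}$ for $s>1$ and the fractional-functional characterization (Proposition \ref{prop:cara}) for $s<1$.

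For the base case $k=0$, so $s\in(d/p,1)$, Corollary \ref{cor:Li} gives $f\in L^\infty$ with $\|f\|_\infty\lesssim \|f\|_{W^{s,p}}$. Setting $R:=\|f\|_\infty$ and $C_R:=\sup_{|u|\leq R}|F'(u)|$, the mean value theorem gives $|F(f(x))-F(f(y))|\leq C_R\,|f(x)-f(y)|$, whence pointwise $S_s^{\rho,loc}(F(f))(x)\leq C_R\,S_s^{\rho,loc}(f)(x)$; since $F(0)=0$ also $|F(f)|\leq C_R|f|$. Proposition \ref{prop:cara} then yields $\|F(f)\|_{W^{s,p}}\lesssim \|f\|_{W^{s,p}}$, with an implicit constant depending polynomially on $\|f\|_\infty$.

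For the inductive step with $s=k+t$, $k\geq 1$, the characterization stated just before the proposition reduces matters to bounding $\|F(f)\|_{L^p}$ (handled as in the base case) and $\|X_I(F(f))\|_{W^{t,p}}$ for every $I\subset\{1,\ldots,\kappa\}^k$. A Fa\`a di Bruno expansion adapted to the noncommuting vector fields $X_i$ writes
\[
X_I(F(f)) \;=\; \sum_{\pi} c_\pi\, F^{(|\pi|)}(f)\,\prod_{B\in\pi} X_B(f),
\]
where $\pi$ runs over set partitions of $I$ into blocks $B$ of size $|B|\leq k$. Each factor $X_B(f)$ lies in $W^{s-|B|,p}$ by the vector-field characterization and, via the Sobolev embedding of Proposition \ref{prop:Sobolev}, in some $L^{q_B}$ with improved integrability. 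For the prefactor, I decompose $F^{(|\pi|)}=F^{(|\pi|)}(0)+G_\pi$ with $G_\pi\in C^\infty$, $G_\pi(0)=0$; the induction hypothesis applied to $G_\pi$ controls $G_\pi(f)\in W^{s,p}\hookrightarrow L^\infty$. The products are then assembled in $W^{t,p}$ through the fractional Leibniz/algebra-type estimates of \cite{BBR}, producing a bound polynomial in $\|f\|_{W^{s,p}}$ of degree at most $k$; summing over the finitely many multi-indices $I$ yields the stated inequality.

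The main obstacle is the fractional Leibniz estimate needed to bound $F^{(|\pi|)}(f)\prod_B X_B(f)$ in $W^{t,p}$: in a non-Euclidean setting, Fourier and Littlewood--Paley tools are unavailable, so one must rely on the fractional functional $S_t^{\rho,loc}$, the doubling hypothesis and the maximal inequality (Theorem \ref{MIT}) to simulate the usual product rule in the spirit of \cite{BBR}. Once that ingredient is granted, the induction closes, and careful bookkeeping of the number of factors appearing in the Fa\`a di Bruno expansion delivers exactly the polynomial growth $\|f\|_{W^{s,p}}+\|f\|_{W^{s,p}}^{k}$ stated in the proposition.
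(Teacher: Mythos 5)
Your proposal is correct in outline, but it takes a genuinely different route from the paper's. The paper deliberately avoids induction: it cites the inductive proof of \cite[Thm~22]{CRT} and then announces ``for completeness, we produce another direct proof.'' That direct proof expands $X_I(F(f))$ by the chain rule into terms $h=\bigl[\prod_\alpha X_{i_\alpha}\bigr](f)\,F^{(n)}(f)$, bounds $|h(x)-h(y)|$ by hand (one difference factor $X_{i_\beta}(f)(x)-X_{i_\beta}(f)(y)$ times sup's of the others, plus a term with $F^{(n)}(f)(x)-F^{(n)}(f)(y)$ handled via Lipschitz continuity of $F^{(n)}$), and then manufactures the multi-factor ``Leibniz'' estimate itself by choosing explicit H\"older/maximal exponents $\rho_\alpha,p_\alpha$ with $p_\alpha=\tfrac{|I|+t}{|i_\alpha|+t}p$ so that every Sobolev embedding lines up, closing the bound through $S_t^\rho$, the maximal operators $\M_{\rho_\alpha}$, and Proposition~\ref{prop:Sobolev}. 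You instead induct on $k$, rewrite $F^{(|\pi|)}=F^{(|\pi|)}(0)+G_\pi$ to invoke the induction hypothesis, and then appeal to the fractional Leibniz/algebra estimates of \cite{BBR} to assemble the products. That is a valid modular strategy and closer in spirit to the \cite{CRT} proof that the paper sidesteps; what the paper's route buys is self-containment, since the precise multi-factor product estimate in $W^{t,p}$ with the required constellation of exponents $(\rho_\alpha,p_\alpha,\overline{\rho_\alpha},\overline{p_\alpha},\overline{p})$ is exactly the nontrivial content, and it is not quite an off-the-shelf consequence of the bilinear algebra/Leibniz results in \cite{BBR} without the iterative bookkeeping that the paper performs explicitly. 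If you do go your way, you should make precise which Leibniz-type inequality of \cite{BBR} you invoke and how it iterates over $|\pi|$ factors with the mixed $L^{q_B}$/$W^{t,q_B}$ regularities you need, since that is where the degree-$k$ polynomial bound ultimately comes from.
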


We refer the reader to \cite{CRT} for a proof by induction on $k$. Here for completeness, we produce another direct proof.

\begin{proof} We use the previous characterization of the Sobolev space with $S_{t}^\rho$ for $s=k+t$. First using the differentiation rule, it comes $X_i(F(f))=X_i(f) F'(f)$, then $X_j X_i(F(f)) = X_jX_i(f) F'(f) + X_i(f) X_j(f) F''(f)$ ... By iterating the reasoning, for $I \subset \{1,...,\K \}^k$, estimating $X_I(F(f))$ in $W^{t,p}$ is reduced to estimate quantities as
$$ h:= \left[\prod_{\alpha=1}^l X_{i_\alpha}\right] (f) F^{(n)}(f)$$
where $i_\alpha \subset I$, $n\leq k$ and $\sum |i_\alpha| = |I|\leq k$.
Then for $x,y$, we have
\begin{align*}
 \left|h(x)-h(y)\right| \leq & \sum_{\beta} |X_{i_{\beta}}(f)(x) - X_{i_{\beta}}(f)(y)| \prod_{\alpha\neq \beta } \sup_{z=x,y} |X_{i_\alpha} (f)(z)|  \|F^{(n)}(f)\|_{L^\infty} \\
 & + \prod_{\alpha} \sup_{z=x,y} |X_{i_\alpha} (f)(z)| |F^{(n)}(f)(x) - F^{(n)}(f)(y)|. 
\end{align*}
By this way, since $\rho\leq p$ let us choose exponents $\rho_\alpha$, $p_\alpha$ such that 
$$ \frac{1}{\rho} = \sum_\alpha \frac{1}{\rho_\alpha}, \quad \rho_\alpha \leq p_\alpha$$
and
$$ \frac{1}{p}=\sum_\alpha \frac{1}{p_\alpha}.$$
Moreover we require that 
\begin{equation}\frac{1}{p_\alpha} - \frac{|i_\alpha|+t}{d} > \frac{1}{p} - \frac{s}{d}. \label{eq:sobo} \end{equation}
This is possible since $\sum_{\alpha} |i_\alpha| =|I|\leq s-t$ and $s>d/p$ (indeed we let the reader to check that $p_\alpha=\frac{|I|+t}{|i_\alpha|+t} p$ is a good choice). Moreover, we chose 
exponents $\overline{\rho_\alpha},\overline{\rho}$, $\overline{p_\alpha}$ and $\overline{p}$ such that 
$$ \frac{1}{\rho} = \sum_\alpha \frac{1}{\overline{\rho_\alpha}}+ \frac{1}{\overline{\rho}}, \quad \overline{\rho_\alpha} \leq \overline{p_\alpha}$$
and $\overline{\rho} \leq \overline{\rho}\leq \overline{p}$ with
$$ \frac{1}{p}=\sum_\alpha \frac{1}{\overline{p_\alpha}}+\frac{1}{\overline{p}}.$$
As previously, we require (\ref{eq:sobo}) with $\overline{p_\alpha}$ instead of $p_\alpha$ and  
\begin{equation}\frac{1}{\overline{p}} > \frac{1}{p} - \frac{s}{d}. \label{eq:sobo2} \end{equation}
Such exponents can be chosen by perturbing the previous construction with a small parameter since $s>d/p$.
By this way (with H\"older inequality), we deduce that 
\begin{align*} 
S_t^\rho(h) & \lesssim \sum_{\beta} S_t^{\rho_\beta}(X_{i_{\beta}}(f)) \prod_{\alpha\neq \beta } \M_{\rho_\alpha} [X_{i_\alpha} (f)] \|F^{(n)}(f)\|_{L^\infty} \\
 & \ \ + \prod_{\alpha} \M_{\overline{\rho_\alpha}} [X_{i_\alpha} (f)] S_t^{\overline \rho}(F^{(n)}(f) .
\end{align*}
Since $F$ is supposed to be bounded in $C^\infty$, then $F^{(n)}$ is Lipschitz and so, we finally obtain
\begin{align*} S_t^\rho(h) & \lesssim \sum_{\beta} S_t^{\rho_\beta}(X_{i_{\beta}}(f)) \prod_{\alpha\neq \beta } \M_{\rho_\alpha} [X_{i_\alpha} (f)] \|F^{(n)}(f)\|_{L^\infty} \\
 & + \prod_{\alpha} \M_{\overline{\rho_\alpha}} [X_{i_\alpha} (f)] S_t^{\overline{\rho}}(f).
\end{align*}
Then applying H\"older inequality, we get
\begin{align*}
\| S_t^\rho(h)\|_{L^p} & \lesssim \sum_{\beta} \left\| S_t^{\rho_\beta}(X_{i_{\beta}}(f))\right\|_{L^{p_\beta}}  \prod_{\alpha\neq \beta } \left\| \M_{\rho_\alpha} [X_{i_\alpha} (f)]\right\|_{L^{p_\alpha}} \|F^{(n)}(f)\|_{L^\infty} \\
& \ \ + \prod_{\alpha} \left\| \M_{\overline{\rho_\alpha}} [X_{i_\alpha} (f)]\right\|_{L^{\overline{p_\alpha}}} \left\|S_t^{\overline {\rho}}(f)\right\|_{L^{\overline{p}}}.
\end{align*}
Since (\ref{eq:sobo}) with Sobolev embeddings (Proposition \ref{prop:Sobolev}), we have
$$ \left\| \M_{\rho_\alpha} [X_{i_\alpha} (f)]\right\|_{L^{p_\alpha}} \lesssim \|f\|_{W^{|i_\alpha|,p_\alpha}} \lesssim \|f\|_{W^{s,p}}$$
and
$$  \left\| S_t^{\rho_\beta}(X_{i_{\beta}}(f))\right\|_{L^{p_\beta}} \lesssim \|f\|_{W^{|i_\beta|+t,p_\beta}} \lesssim \|f\|_{W^{s,p}}.$$
So with (\ref{eq:sobo}) and (\ref{eq:sobo2}), we finally obtain
$$ \|S_t^\rho(h)\|_{L^p} \lesssim \|f\|_{W^{s,p}}+\|f\|_{W^{s,p}}^k,$$
where we used $s>d/p$ and the Sobolev embedding $W^{s,p} \subset L^\infty$ with the smoothness of $F$ to control $\|F^{(n)}(f)\|_{L^\infty}$.

Since $F(0)=0$ and $F$ is Lipschitz, we also deduce that $F(f)$ belongs to $L^p$, which allows us to get the expected result
$$ \|F(f)\|_{W^{s,p}} \lesssim \|F(f)\|_{L^p} + \sum_{I} \| S_t^\rho(F(f))\|_{L^p} \lesssim \|f\|_{W^{s,p}} + \|f\|_{W^{s,p}}^k.$$
\end{proof}

\begin{rem} \label{rem:compoSobolev}
 If $F\in C^\infty$ with $F(0)=0$ and $s>d/p$ then to obtain 
 $$ \|F(f)\|_{W^{s,p}} \lesssim \|f\|_{W^{s,p}}+ \|f\|_{W^{s,p}}^k,$$
 it is sufficient to assume that $F$ is locally bounded in $C^\infty$ and then the implicit constant will depend on $\|f\|_{L^\infty}$.
 Indeed, using Sobolev embedding, we know that as soon as $s>d/p$, $W^{s,p}$ is continuously embedded in $L^\infty$.
\end{rem}

\section{Paraproducts associated to a semigroup} \label{sec:para}

Our aim is to describe a kind of ``paralinearization'' results. In the Euclidean case, this is performed by using paraproducts (defined with the help of Fourier transform). Here, we cannot use such powerful tools, so we require other kind of paraproducts, defined in terms of semigroup. These ones were introduced by the first author in \cite{B2}, already used in \cite{BBR} and more recently was extended in \cite{Phd, Frey}. Let us recall these definitions.

 \subsection{Definitions and spectral decomposition} \label{subsec:def}
 
We consider a sub-Laplacian operator $L$ satisfying the assumptions of the previous sections. 
We write for convenience $c_0$ for a suitably chosen constant, $\psi(x)=c_0 x^N e^{-x}(1-e^{-x})$ and so
$$ \psi_t(L):=c_0(tL)^N e^{-tL}(1-e^{-tL}),$$
with a large enough integer $N>d/2$.
Let $\phi$ be the function
$$ \phi(x):=-c_0\int_x^\infty y^N e^{-y}(1-e^{-y}) dy,$$
$$ \tphi(x):=-c_0\int_x^\infty y^{N-1} e^{-y}(1-e^{-y}) dy,$$
and set $ \phi_t(L):=\phi(tL)$.
Then we get a ``spectral'' decomposition of the identity as follows (choosing the appropriate constant $c_0$), we have
$$ f  = - \int_0^\infty \phi'(tL) f \frac{dt}{t}.$$
So for two smooth functions, we have
$$ fg := -\int_{s,u,v>0} \phi'(sL) \left[\phi'(uL)f \, \phi'(vL)g \right] \frac{dsdudv}{suv}.$$
Since $\phi'(x)=\psi(x):=c_0x^Ne^{-x}(1-e^{-x})$ and $x\tphi '(x)=\phi'(x)$, it comes that (by integrating according to $t:=\min\{s,u,v\}$)
\begin{equation}
\begin{split}
 fg := & - \int_{0}^\infty \psi(tL) \left[\tphi(tL)f \, \tphi(tL)g \right] \frac{dt}{t} -  \int_{0}^\infty \tphi(tL) \left[\psi(tL)f \, \tphi(tL)g \right] \frac{dt}{t}  \\
& -\int_{0}^\infty \tphi(tL) \left[\tphi(tL)f \, \psi(tL)g \right] \frac{dt}{t}.
\end{split}
 \label{eq:decomp}
\end{equation}
Let us now focus on the first term in (\ref{eq:decomp}) :
$$ I(f,g) = \int_{0}^\infty \psi(tL) \left[\tphi(tL)f \, \tphi(tL)g \right] \frac{dt}{t}.$$
Since $N>> 1$, let us write $\psi(z)=z\tilde{\psi}$ with $\tilde{\psi}$ (still vanishing at $0$ and at infinity).
Then using the structure of the sub-Laplacian $L$, the following algebra rule holds 
$$L(fg)=L(f) g + f L(g) + \langle X f\cdot X g \rangle,$$
where $X$ is the collection of vector fields $Xf:=(X_1f,...,X_\K f)$. 
Hence, we get
\begin{align*}
 I(f,g) & =  \int_{0}^\infty \tilde{\psi}(tL) (tL)\left[\tphi(tL)f \, \tphi(tL)g \right] \frac{dt}{t}\\
 & =  \int_{0}^\infty \tilde{\psi}(tL) \left[tL\tphi(tL)f \, \tphi(tL)g \right] \frac{dt}{t} \\
 & \  + \int_{0}^\infty \tilde{\psi}(tL) \left[\tphi(tL)f \, tL\tphi(tL)g \right] \frac{dt}{t} \\
 & \  + \int_{0}^\infty \tilde{\psi}(tL) t \langle X \tphi(tL)f \cdot X \tphi(tL)g \rangle \frac{dt}{t}. 
\end{align*}
Combining with (\ref{eq:decomp}), we define the paraproduct as follows~:

\begin{df} \label{def:paraproduit} With the previous notations, we define the paraproduct of $f$ by $g$, by
\begin{align*}
 \Pi_g(f) := & - \int_{0}^\infty \tilde{\psi}(tL) \left[tL \tphi(tL)f \, \tphi(tL)g \right] \frac{dt}{t} \\
  & - \int_{0}^\infty \tphi(tL) \left[\psi(tL)f \, \tphi(tL)g \right] \frac{dt}{t}.  
\end{align*}
\end{df}

\begin{rem} \label{rem:imp2} We first want to point out the difference with the initial definition in \cite{B2}. There, general semigroup was considered and the previous operation on the term $I$ can be performed by making appear the ``carr\'e du champ'' introduced by Bakry and \'Emery (see \cite{BE} for details)
$$ \Gamma(f,g) := L(fg)-L(f)g - fL(g)$$
instead of the vector field $X$.
However in \cite{B2}, the paraproducts was only defined by the second term. This new definition comes from the following observation: considering the quantity $I(f,g)$ and distributing the Laplacian as we have done (or make appearing the ``carr\'e du champ''), it comes three terms. The term $\tilde{\psi}(tL) \left[tL \tphi(tL)f \, \tphi(tL)g \right]$ has the same regularity properties as $\tphi(tL) \left[\psi(tL)f \, \tphi(tL)g \right]$ (in the sense that $tL \tphi(tL)$ can be considered as $\psi(tL)$). This also legitimate to add this extra term in the definition of the paraproducts.

By this way, as we will see in the next properties and in Remark \ref{rem:imp}, this new paraproduct is the ``maximal'' (in a certain sense) part of the product $fg$, where the regularity is given by the regularity of $f$.
\end{rem}

It naturally comes the following decomposition~:

\begin{cor} \label{cor:decomp} Let $f,g$ be two smooth functions, then we have
$$ fg = \Pi_g(f) + \Pi_f(g) + \texttt{Rest}(f,g)$$
where the ``rest'' is given by
$$ \texttt{Rest}(f,g):=- \int_{0}^\infty \tilde{\psi}(tL) \langle t^{1/2}X\tphi(tL)f , t^{1/2}X\tphi(tL)g\rangle \frac{dt}{t}. $$
 \end{cor}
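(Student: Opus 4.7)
The strategy is essentially to read off the identity by collecting the terms already produced in the derivation just above Definition \ref{def:paraproduit}. Nothing new has to be estimated; this is a purely algebraic rearrangement of the spectral decomposition (\ref{eq:decomp}).

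First I would start from the three-term decomposition (\ref{eq:decomp}) of $fg$ obtained from the Calder\'on reproducing identity $f=-\int_0^\infty \phi'(tL)f\,\frac{dt}{t}$. Among the three summands, only the first one, $I(f,g):=\int_0^\infty \psi(tL)[\tphi(tL)f\cdot\tphi(tL)g]\,\frac{dt}{t}$, is symmetric in $f$ and $g$, so it is this term that has to be processed in order to dispatch a ``low-frequency on $f$ / high-frequency on $g$'' contribution to $\Pi_f(g)$, a symmetric one to $\Pi_g(f)$, and an irreducible remainder.

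Next I would factor $\psi(z)=z\tilde\psi(z)$ as in the text and apply the sub-Laplacian Leibniz rule
\[
 L(uv)=L(u)v+uL(v)+\langle Xu,Xv\rangle
\]
to $u=\tphi(tL)f$, $v=\tphi(tL)g$ inside $\tilde\psi(tL)\circ(tL)$. This rewrites $I(f,g)$ as the sum of exactly three pieces: $\int\tilde\psi(tL)[tL\tphi(tL)f\cdot\tphi(tL)g]\frac{dt}{t}$, its $f\leftrightarrow g$ analogue, and the gradient term $\int\tilde\psi(tL)\langle t^{1/2}X\tphi(tL)f,\, t^{1/2}X\tphi(tL)g\rangle\frac{dt}{t}$.

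Finally I would plug this into (\ref{eq:decomp}) (remembering the overall minus sign) and recognize, by direct comparison with Definition \ref{def:paraproduit}, that the first and third pieces combine with $-\int\tphi(tL)[\psi(tL)f\cdot\tphi(tL)g]\frac{dt}{t}$ and $-\int\tphi(tL)[\tphi(tL)f\cdot\psi(tL)g]\frac{dt}{t}$ respectively to give exactly $\Pi_g(f)+\Pi_f(g)$. What remains is the gradient term, which by the definition of $\texttt{Rest}(f,g)$ is precisely the claimed remainder. The only real ``choice'' in the argument is the decision to push the $L$ in $\psi(tL)=tL\cdot\tilde\psi(tL)$ outward onto the product (rather than keep it inside), which is the step that produces the $\langle X\cdot,X\cdot\rangle$ remainder; everything else is symbol manipulation, and since all functions are smooth and $N$ is large the use of Fubini in (\ref{eq:decomp}) and in the subsequent manipulations is unproblematic, so no obstacle arises beyond careful bookkeeping of signs.
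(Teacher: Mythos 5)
Your proposal matches the paper's own (implicit) proof exactly: the corollary is just the bookkeeping of the derivation carried out immediately before Definition \ref{def:paraproduit}, where $I(f,g)$ is expanded via $\psi(z)=z\tilde\psi(z)$ and the sub-Laplacian Leibniz rule and then the whole of (\ref{eq:decomp}) is regrouped. One small labeling slip to correct: you write that ``the first and third pieces'' of the expanded $I(f,g)$ combine with the remaining two integrals of (\ref{eq:decomp}) to give $\Pi_g(f)+\Pi_f(g)$, but you mean the first and \emph{second} pieces; the third piece is the gradient term, which, as you correctly say in the next sentence, is precisely $\texttt{Rest}(f,g)$.
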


\subsection{Boundedness of paraproducts in Sobolev and Lebesgue spaces}

Concerning estimates on these paraproducts in Lebesgue spaces, we refer to \cite{B2}:

\begin{thm}[Boundedness in Lebesgue spaces] For $p,q\in(1,\infty]$ with $0<\frac{1}{r}:=\frac{1}{p}+\frac{1}{q}$ then 
$$ (f,g)\rightarrow \Pi_g(f)$$ is bounded from $L^p \times L^q$ into $L^r$.
 \end{thm}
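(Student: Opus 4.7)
The plan is to dualize: when $r\geq 1$, I pick a test function $h\in L^{r'}$ (with $1/r'=1-1/p-1/q$) and aim for $|\langle \Pi_g(f),h\rangle|\lesssim\|f\|_{L^p}\|g\|_{L^q}\|h\|_{L^{r'}}$. The case $r<1$ cannot be dualized against $L^{r'}$ and must be run directly through tent-space norms, but the structural bilinear estimates are the same, so I describe the $r\geq 1$ case. Splitting $\Pi_g(f)=I_1+I_2$ according to Definition \ref{def:paraproduit} and transferring the outer spectral multiplier to $h$ by adjunction gives
\begin{align*}
\langle I_1,h\rangle &=-\int_0^\infty\!\!\int \bigl[tL\tphi(tL)f\bigr]\bigl[\tphi(tL)g\bigr]\bigl[\tilde\psi(tL^*)h\bigr]\,\tfrac{d\mu\,dt}{t},\\
\langle I_2,h\rangle &=-\int_0^\infty\!\!\int \bigl[\psi(tL)f\bigr]\bigl[\tphi(tL)g\bigr]\bigl[\tphi(tL^*)h\bigr]\,\tfrac{d\mu\,dt}{t}.
\end{align*}
The universal pointwise ingredient, used in every factor of type $\tphi(tL)g$, is $|\tphi(tL)g(x)|\lesssim\M g(x)$ uniformly in $t>0$. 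This follows from the kernel bound (\ref{eq:pointwise}) on the heat semigroup after representing $\tphi(tL)$ spectrally as a rapidly convergent integral of operators $e^{-stL}$, which transfers the Gaussian-type kernel bound to $\tphi(tL)$.

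For $I_1$ both ``vanishing'' multipliers $z\mapsto\tilde\psi(z)$ and $z\mapsto z\tphi(z)$ satisfy $|\eta(z)|\lesssim|z|^s/(1+|z|^{2s})$ for some $s>0$ (vanishing polynomially at $0$ and decaying at infinity), so the associated vertical square functions
$$Sf:=\Bigl(\int_0^\infty |tL\tphi(tL)f|^2\,\tfrac{dt}{t}\Bigr)^{\!1/2}, \qquad \tilde S h:=\Bigl(\int_0^\infty|\tilde\psi(tL^*)h|^2\,\tfrac{dt}{t}\Bigr)^{\!1/2}$$
are bounded on $L^p$ and $L^{r'}$ respectively by Proposition \ref{prop:Lp} (applied to $L$ and $L^*$). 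Bounding $\tphi(tL)g$ pointwise by $\M g$, applying Cauchy--Schwarz in $t$ between the other two factors, then H\"older's inequality in $x$ with exponents $(p,q,r')$ and the maximal theorem for $\M g$ closes the $I_1$ estimate.

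For $I_2$ the outer multiplier $\tphi$ does \emph{not} vanish at $0$, so $\bigl(\int_0^\infty|\tphi(tL^*)h|^2\,dt/t\bigr)^{1/2}$ diverges and the direct Cauchy--Schwarz strategy of $I_1$ fails. Here I would invoke the tent-space machinery: on the one hand, the reconstruction operator $F(t,\cdot)\mapsto\int_0^\infty\tphi(tL)F(t,\cdot)\,dt/t$ is bounded from the tent space $T^r_2$ into $L^r$ (this is the standard ``$\pi$-theorem'' for spectral multipliers with suitable decay); on the other, the bilinear map $(f,g)\mapsto\psi(tL)f\cdot\tphi(tL)g$ sends $L^p\times L^q$ into $T^r_2$ via a tent-space H\"older inequality, using that $\psi(tL)f$ lies in $T^p_2$ by the conical form of the square function estimate (which follows from Proposition \ref{prop:Lp} by a change-of-aperture argument), while $\tphi(tL)g$ lies in the companion space $T^q_\infty$ by the pointwise bound $|\tphi(tL)g|\lesssim\M g$.

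The main obstacle is precisely this treatment of $I_2$: the lack of cancellation in the outer $\tphi(tL)$ forces one to abandon the simple vertical square function used for $I_1$ in favor of the conical square function and the associated Carleson-type embedding. Everything else in the proof is routine multilinear Cauchy--Schwarz combined with H\"older's inequality applied to the structural decomposition above.
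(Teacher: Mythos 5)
The paper itself does not prove this theorem: it simply refers the reader to~\cite{B2}, so there is no proof in the present paper to compare yours against. Reviewing your sketch on its own terms, the treatment of $I_1$ is correct and parallels the argument the paper \emph{does} write out for the Sobolev bound (Theorem~\ref{thm:sobolev}): dualize, extract $\sup_t|\tphi(tL)g|\lesssim\M g$, Cauchy--Schwarz in $t$ between two vanishing spectral multipliers, then H\"older and Proposition~\ref{prop:Lp}. That part is fine (for $r>1$; $r\leq1$ needs the separate argument you flag).

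The genuine gap is in your treatment of $I_2$, and it is exactly at the point you yourself identify as the ``main obstacle.'' You claim that the reconstruction operator
$$F(t,\cdot)\ \longmapsto\ \int_0^\infty\tphi(tL)\,F(t,\cdot)\,\frac{dt}{t}$$
is bounded from $T^r_2$ to $L^r$, calling this the ``standard $\pi$-theorem for spectral multipliers with suitable decay.'' This is false here, precisely because $\tphi(0)=1\neq0$. Boundedness of such a reconstruction operator from a tent space into a Lebesgue (or Hardy) space requires the multiplier to vanish at $0$ as well as decay at $\infty$; without vanishing at $0$, even the $T^2_2\to L^2$ estimate fails. (Dualizing against $h\in L^2$ produces the vertical square function of $\tphi(tL^*)h$, which diverges since $\int_0^\infty|\tphi(tL^*)h|^2\,dt/t$ is infinite whenever $\tphi(0)\neq0$; equivalently, test on $F(t,x)={\bf 1}_{[1,T]}(t)\,\phi_0(x)$ for low-frequency $\phi_0$ and let $T\to\infty$.) So switching from the vertical to the conical square function, or invoking a change-of-aperture argument, does not repair this: the obstruction is in the $t$-integral, not in the $x$-averaging. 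As written, your $I_2$ estimate does not close.

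To fix it one must use the cancellation sitting in $\psi(tL)f$ rather than asking the outer $\tphi(tL)$ to supply it. One workable route is to split the outer operator as $\tphi(tL)=I+(\tphi(tL)-I)$; the $(\tphi(tL)-I)$-part does vanish at $0$ and can be treated exactly as your $I_1$. For the remaining piece $\int_0^\infty[\psi(tL)f][\tphi(tL)g]\,dt/t$, split again $\tphi(tL)g=g+(\tphi(tL)-I)g$ and use that $\int_0^\infty\psi(tL)f\,dt/t=c\,f$ by the Calder\'on reproducing formula, plus Cauchy--Schwarz in $t$ between the two now-cancellative square functions for the cross term, handling the $t\to\infty$ tail separately by the exponential decay of $\psi$ and $\tphi$. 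Alternatively, and closer to what~\cite{B2} (a semigroup $T(1)$-theorem) actually does, one can run a Carleson-measure / $T(1)$-type argument in which the cancellation of $\psi(tL)$ on the $f$-slot is what produces the molecular decay needed for the $L^r$ bound. Either way, the point is that the outer $\tphi$ cannot by itself carry the reconstruction estimate, and the argument must be reorganized so that the cancellative factor $\psi(tL)f$ does the work.
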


Let us now describe boundedness in the scale of Sobolev spaces.

\begin{thm}[Boundedness in Sobolev spaces] \label{thm:sobolev} For $p,q,r\in(1,\infty)$ with $\frac{1}{r}:=\frac{1}{p}+\frac{1}{q}$  and $s\in(0;2N-4)$ then 
$$ (f,g)\rightarrow \Pi_g(f)$$ is bounded from $W^{s,p} \times L^q$ into $W^{s,r}$.
\end{thm}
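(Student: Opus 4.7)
The plan is to reduce via Proposition~\ref{prop:equivalence} to bounding $\|\Pi_g(f)\|_{L^r}+\|L^{s/2}\Pi_g(f)\|_{L^r}$. The Lebesgue piece is immediate from the $L^r$-boundedness theorem stated just above (combined with the embedding $W^{s,p}\hookrightarrow L^p$ since $s>0$). For the second piece I would dualize against $h\in L^{r'}$ with $\|h\|_{L^{r'}}\le 1$ and estimate the two integrals of Definition~\ref{def:paraproduit} separately.

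For the second integral, the key step is the pair of spectral identities
$$L^{s/2}\tphi(tL)=t^{-s/2}\tphi_s(tL),\qquad \psi(tL)=t^{s/2}\psi_{-s}(tL)\,L^{s/2},$$
with $\tphi_s(x):=x^{s/2}\tphi(x)$ and $\psi_{-s}(x):=x^{-s/2}\psi(x)=c_0 x^{N-s/2}e^{-x}(1-e^{-x})$. The hypothesis $s<2N-4$ guarantees $N-s/2>2$, so both $\tphi_s$ and $\psi_{-s}$ satisfy the vanishing/decay condition of Remark~\ref{rem:holo}. Substituting, the dual pairing becomes a space-time integral of $\psi_{-s}(tL)(L^{s/2}f)\cdot\tphi(tL)g\cdot\tphi_s(tL^*)h$ against $d\mu\,\frac{dt}{t}$. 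Bounding $|\tphi(tL)g|\lesssim\mathcal{M}g$ pointwise (from the kernel estimate \eqref{eq:pointwise} extended via Remark~\ref{rem:holo}), applying Cauchy--Schwarz in $t$ and H\"older in $x$ with exponents $(p,q,r')$, one obtains
$$\|S_{\psi_{-s}}(L^{s/2}f)\|_{L^p}\,\|\mathcal{M}g\|_{L^q}\,\|S_{\tphi_s}(h)\|_{L^{r'}}\lesssim\|f\|_{W^{s,p}}\|g\|_{L^q}$$
by Proposition~\ref{prop:Lp} and Theorem~\ref{MIT}, where $S_\eta(\cdot):=\bigl(\int_0^\infty|\eta(tL)\cdot|^2\frac{dt}{t}\bigr)^{1/2}$.

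The first integral is the main obstacle, since $\phi_1(x):=x\tphi(x)$ only vanishes to order $1$ at $x=0$ (because $\tphi(0)\neq 0$), which blocks any direct spectral extraction of $L^{s/2}$ from $\phi_1(tL)$ as soon as $s>2$. I would resolve this by applying the sub-Laplacian algebra rule $tL(uv)=tLu\cdot v+u\cdot tLv+t\langle Xu,Xv\rangle$ to $u=\tphi(tL)f$ and $v=\tphi(tL)g$, combined with the spectral identity $tL\tilde\psi(tL)=\psi(tL)$. This rewrites the first integral as the sum of: (i) one with outer multiplier $\psi(tL)$ (vanishing of order $N$), amenable to the treatment of the previous paragraph; (ii) one where $\phi_1$ now falls on the $g$-slot and is absorbed pointwise by $|\phi_1(tL)g|\lesssim\mathcal{M}g$; and (iii) a gradient-pairing integral controlled through the square function \eqref{ass:square} and Proposition~\ref{prop:Lp}. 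The slack $s<2N-4$ is consumed exactly to keep every rewritten spectral symbol (of type $x^a\psi(x)$ or $x^b\tphi(x)$ with $|a|,|b|\lesssim s/2+2$) inside the class of Remark~\ref{rem:holo}, and it is the gradient-pairing piece (iii) where Assumption~\ref{ass:riesz} enters essentially.
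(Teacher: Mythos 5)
Your treatment of the second integral, $\Pi^2_g(f)=-\int_0^\infty\tphi(tL)[\psi(tL)f\cdot\tphi(tL)g]\frac{dt}{t}$, matches the paper's proof exactly: there the authors set $\beta=s/2$, write $\overline\psi(z)=z^\beta\tphi(z)$ and $\widetilde\psi(z)=z^{-\beta}\psi(z)$, verify these vanish at $0$ and at $\infty$ (which is precisely where $s<2N-4$ is used), and then dualize, applying Cauchy--Schwarz in $t$, H\"older with exponents $(p,q,r')$, the maximal bound $\sup_t|\tphi(tL)g|\lesssim\M g$, and Proposition~\ref{prop:Lp}. Your $\tphi_s,\psi_{-s}$ are the same functions under different names.

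Your diagnosis of the difficulty in the first integral is also correct: $\phi_1(z)=z\tphi(z)$ has only a simple zero at $0$ (since $\tphi(0)=1$), so the direct spectral extraction $z^{-s/2}\phi_1(z)=z^{1-s/2}\tphi(z)$ produces a function vanishing at the origin only when $s<2$. But your proposed remedy does not repair this; it makes it worse. When you undo the Leibniz rule, term (i) is literally $-\int_0^\infty\psi(tL)[\tphi(tL)f\cdot\tphi(tL)g]\frac{dt}{t}=-I(f,g)$, i.e.\ you have reversed the very manipulation that produced $\Pi^1$ in Subsection~\ref{subsec:def}. Its $f$-slot is $\tphi(tL)$, which has \emph{no} vanishing at $z=0$; you cannot extract any positive power of $L$ from it, and $\bigl(\int_0^\infty|\tphi(tL)f|^2\frac{dt}{t}\bigr)^{1/2}$ is not $L^p$-bounded (the integral near $t=0$ diverges because $\tphi(tL)f\to f$). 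The same applies to term (ii): after absorbing $|\phi_1(tL)g|\lesssim\M g$, the $f$-slot is again $\tphi(tL)$, and there is nothing left from which to produce $L^{s/2}f$. So the claim that (i) is "amenable to the treatment of the previous paragraph" is false --- that treatment required the $f$-slot symbol to vanish to high order at $0$, which $\tphi$ does not do at all. The paper itself dispatches $\Pi^1$ with a one-line "the same arguments still hold," which indeed works via $z^{-\beta}\phi_1(z)=z^{1-\beta}\tphi(z)$ as long as $\beta=s/2<1$, and that suffices for every later application (which has $s<1$). But whatever the true state of the full range $s\in(0,2N-4)$, your rewriting moves in the wrong direction: you should keep the $L$ already attached to the $f$-slot (that single factor of $z$ in $\phi_1$ is the only vanishing you have there) rather than peel it off.
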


\begin{proof} It is sufficient to prove the following homogeneous estimates : for every $\beta\in[0,N-2)$
$$ \| L^{\beta}\Pi_g(f)\|_{L^{r}} \lesssim \|L^\beta(f)\|_{L^p} \|g\|_{L^q}.$$
For $\beta=0$, this is the previous theorem so it remains us to check it for $\beta \in (0,N-2)$.
We recall that 
\begin{align*}
 \Pi_g(f)  = & - \int_{0}^\infty \tilde{\psi}(tL) \left[tL \tphi(tL)f \, \tphi(tL)g \right] \frac{dt}{t} \\
  & - \int_{0}^\infty \tphi(tL) \left[\psi(tL)f \, \tphi(tL)g \right] \frac{dt}{t},  
\end{align*}
giving rise to two quantities, $\Pi^1_g(f)$ and $\Pi^2_g(f)$. Indeed, applying $L^\beta$ to the paraproduct $\Pi^2_g(f)$, it yields
\begin{align*} 
L^\beta \Pi^2_g(f) & =  \int_{0}^\infty L^\beta \tphi(tL) \left[\psi(tL)f \, \tphi(tL)g \right] \frac{dt}{t} \\
 & =  \int_{0}^\infty \overline{\psi}(tL) \left[t^{-\beta} \psi(tL)f \, \tphi(tL)g \right] \frac{dt}{t} \\
 & = \int_{0}^\infty \overline{\psi}(tL) \left[ \widetilde{\psi}(tL) L^\beta f \, \tphi(tL)g \right] \frac{dt}{t},
\end{align*}
where we set $\overline{\psi}(z)=z^\beta \phi(z)$ and $\widetilde{\psi}(z)=z^{-\beta} \psi(z)$. So if the integer $N$ in $\phi$ and $\psi$ is taken sufficiently large, then $\overline{\psi}$ and $\widetilde{\psi}$ are still holomorphic functions with vanishing properties at $0$ and at infinity. As a consequence, we get
$$ L^\beta \Pi_g(f)  = \overline{\Pi}_g(L^\beta f)$$
with the new paraproduct $\overline{\Pi}$ built with $\overline{\psi}$ and $\widetilde{\psi}$. We also apply the classical reasoning aiming to estimate this paraproduct.
By duality, for any smooth function $h\in L^{r'}$ we have
\begin{align*}
  \langle L^\beta \Pi^2_g(f), h\rangle   & = \int \int_{0}^\infty \overline{\psi}(tL^*)h  \, \widetilde{\psi}(tL)(L^\beta f) \, \tphi(tL)g \frac{dt}{t} d\mu \\
  & \leq \int \left(\int_0^\infty | \overline{\psi}(tL^*)h|^2 \frac{dt}{t} \right)^ {1/2} \left(\int_0^\infty |\widetilde{\psi}(tL)(L^\beta f)|^2 \frac{dt}{t} \right)^{1/2} \sup_t |\tphi(tL)g| d\mu. 
\end{align*}
From the pointwise decay on the semigroup (\ref{eq:pointwise}), we know that
$$ \sup_t |\tphi(tL)g(x)| \leq \M(g)(x)$$ 
and so by H\"older inequality
\begin{align*}
  \left| \langle L^\beta \Pi^2_g(f), h\rangle \right| \lesssim \left\| \left(\int_0^\infty |\overline{\psi}(tL^*)h|^2 \frac{dt}{t} \right)^ {1/2} \right\|_{L^{r'}} \left\|\left(\int_0^\infty |\widetilde{\psi}(tL)(L^\beta f)|^2 \frac{dt}{t} \right)^{1/2}\right\|_{L^p}  \left\| {\M} g\right\|_{L^q}. 
\end{align*}
Since $\overline{\psi}$ and $\widetilde{\psi}$ are holomorphic functions vanish at $0$ and having fast decays at infinity, we know from (\ref{prop:Lp}) that the two square functions are bounded on Lebesgue spaces. We also conclude the proof by duality, since it comes
\begin{align*}
  \left| \langle L^\beta \Pi^2_g(f), h\rangle \right| \lesssim \left\| h \right\|_{L^{r'}} \left\|L^\beta f \right\|_{L^p}  \left\| g\right\|_{L^q}. 
\end{align*}
We let the reader to check that the same arguments still holds for the first part $\Pi^1_g(f)$ and so the proof is also finished.
\end{proof}

\section{Linearization Theorem}

\begin{thm} \label{thm:linearization} Consider $s\in(d/p,1)$ and $f\in W^{s+\epsilon,p}$ for some $\epsilon>0$. Then for every smooth function $F\in C^\infty(\R)$ with $F(0)=0$,
\begin{equation} F(f) = \Pi_{F'(f)}(f) + w \label{eq:decomposit} \end{equation}
with $w\in W^{2s-d/p,p}$.
 \end{thm}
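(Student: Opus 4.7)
My plan is to combine the fundamental-theorem-of-calculus representation of $F(f)$ with the algebraic structure behind the paraproduct. By the chain rule applied to $t\mapsto F(\tphi(tL)f)$, the identity $x\tphi'(x)=\psi(x)$, the boundary behavior $\tphi(tL)f\to f$ as $t\to 0^+$ (after suitable normalization of $c_0$) and $\tphi(tL)f\to 0$ as $t\to\infty$, together with $F(0)=0$, we obtain
\begin{equation*}
F(f) \;=\; -\int_0^\infty F'(\tphi(tL) f)\cdot\psi(tL)f\, \frac{dt}{t}.
\end{equation*}
I then subtract the second half of the paraproduct, $\Pi^{2}_{F'(f)}(f) = -\int_0^\infty \tphi(tL)[\psi(tL)f\cdot \tphi(tL)F'(f)]\frac{dt}{t}$, and split the resulting difference into a ``Taylor-type'' remainder and an outer-smoothing commutator:
\begin{equation*}
F(f)-\Pi^{2}_{F'(f)}(f) \;=\; R_2 \;-\; \int_0^\infty (I-\tphi(tL))\bigl[\psi(tL)f\cdot\tphi(tL)F'(f)\bigr]\,\frac{dt}{t},
\end{equation*}
where $R_2 := -\int_0^\infty [F'(\tphi(tL)f) - \tphi(tL)F'(f)]\,\psi(tL)f\,\frac{dt}{t}$.

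Applying the sub-Laplacian Leibniz identity $L(uv)=L(u)v + uL(v) + \langle Xu, Xv\rangle$ to the commutator integral, exactly as in the construction of $\Pi$ recorded in Corollary \ref{cor:decomp} and Remark \ref{rem:imp2}, reorganizes it into $-\Pi^{1}_{F'(f)}(f)$ plus a genuine ``carr\'e du champ''-style term
\begin{equation*}
R_1 := -\int_0^\infty \tilde\psi(tL)\, t\, \langle X\tphi(tL)f,\,X\tphi(tL)F'(f)\rangle\,\frac{dt}{t}.
\end{equation*}
Thus $w = F(f)-\Pi_{F'(f)}(f) = R_1 + R_2$, and the theorem reduces to showing $R_1,R_2\in W^{2s-d/p,p}$.

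The crucial structural observation is that both remainders are \emph{genuinely bilinear} in scale-$t$ high-frequency factors of $f$ and $F'(f)$: in $R_1$ the pair $t^{1/2}X\tphi(tL)f$, $t^{1/2}X\tphi(tL)F'(f)$ are vertical square-function bounded on every $L^q$, $q\in(1,\infty)$, by Proposition \ref{prop:Lp}; in $R_2$ the factor $\psi(tL)f$ is paired with $F'(\tphi(tL)f)-\tphi(tL)F'(f)$, which by Taylor expansion equals $\bigl[\int_0^1 F''(f+\sigma(\tphi(tL)f-f))\,d\sigma\bigr](I-\tphi(tL))f + (I-\tphi(tL))F'(f)$ and therefore again carries a high-frequency factor at scale $t$. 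Since $s+\epsilon>d/p$, Corollary \ref{cor:Li} gives $f,F'(f)\in L^\infty$; placing one of the two high-frequency factors in $L^\infty$ via Sobolev embedding costs exactly $d/p$ in the regularity budget, while the remaining square-function pairing supplies the full $2s$ smoothness through Proposition \ref{prop:Lp}. The square-function characterization of Proposition \ref{prop:cara} — valid precisely for $s<1$, which is why the theorem restricts to that range — then upgrades these $L^p$-bounds to $R_1,R_2\in W^{2s-d/p,p}$. The main obstacle is making this last step rigorous for $R_2$: one must upgrade the easy pointwise bound $\|F'(\tphi(tL)f)-\tphi(tL)F'(f)\|_\infty\lesssim t^{(s+\epsilon-d/p)/2}$ into a genuine $L^p$-estimate on a square function that witnesses $2s-d/p$ derivatives, which requires invoking the off-diagonal decay of the semigroup from Assumption \ref{ass:semigroup} together with the quadratic $t$-scaling of the two high-frequency contributions.
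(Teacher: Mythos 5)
The overall strategy (represent $F(f)$ by the fundamental theorem of calculus along the flow $t\mapsto\tphi(tL)f$, pull out the paraproduct, estimate the rest via square functions and Sobolev embedding) is the right one and does match the paper, but the algebraic decomposition you propose is incorrect, and the error is structural rather than cosmetic.

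The problematic step is the claim that the commutator integral
$-\int_0^\infty (I-\tphi(tL))\bigl[\psi(tL)f\cdot\tphi(tL)F'(f)\bigr]\frac{dt}{t}$
``reorganizes into $-\Pi^1_{F'(f)}(f)$ plus the carr\'e-du-champ term $R_1$'' by an application of the Leibniz rule. There is no Laplacian present in the operator $I-\tphi(tL)$ to distribute: the Leibniz identity applies to $L(uv)$ (equivalently, to $\psi(tL)=tL\,\tilde\psi(tL)$ applied to a product), not to $I-\tphi(tL)$. As a consequence the claimed identity
$F(f)-\Pi_{F'(f)}(f)=R_1+R_2$
is false. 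You can see this already for $F(u)=u^2$: then $F'(f)=2f$, so $R_2=0$ and your $R_1=\texttt{Rest}(f,2f)=2\,\texttt{Rest}(f,f)$, whereas Corollary \ref{cor:decomp} gives $f^2-\Pi_{2f}(f)=f^2-2\Pi_f(f)=\texttt{Rest}(f,f)$ — so you are off by a factor of $2$. More importantly, for a general nonlinearity the correct ``carr\'e-du-champ''-type remainder involves $F''(\tphi(tL)f)\,|t^{1/2}X\tphi(tL)f|^2$, which is genuinely quadratic in $X\tphi(tL)f$ with a coefficient $F''(\tphi(tL)f)$; your $R_1$ instead is the bilinear expression $\langle X\tphi(tL)f,\,X\tphi(tL)F'(f)\rangle$, i.e.\ $\texttt{Rest}(f,F'(f))$, which has a different structure and misses the $F''$ contribution entirely. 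The $F''$ term is produced by the chain rule for the sub-Laplacian acting on a \emph{composition}, $L\bigl(F(u)\bigr)=F'(u)\,Lu - F''(u)\,|Xu|^2$, not by the Leibniz rule for a product. The paper makes this work by keeping an extra outer $\tphi(tL)$ in the FTC representation, i.e.\ it differentiates $t\mapsto\tphi(tL)\,F(\tphi(tL)f)$ rather than $t\mapsto F(\tphi(tL)f)$; this way the term coming from the outer factor is of the form $\psi(tL)\bigl[F(\tphi(tL)f)\bigr]=\tphi'(tL)\bigl[tL\,F(\tphi(tL)f)\bigr]$, to which the chain rule for $L$ genuinely applies. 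In your version that $L$ never appears and the $F''$ term cannot be generated.

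Two further gaps worth flagging. First, the paper truncates the integral at $t=1$, keeping the tail $\int_1^\infty$ as a separate term $V$ estimated directly by square functions, precisely because the Taylor-type remainders (your $R_2$) only gain regularity as $t\to 0$; for $t$ large, $F'(\tphi(tL)f)-\tphi(tL)F'(f)$ does not decay (it tends to a constant close to $F'(0)$), so integrating over all of $(0,\infty)$ as you do leaves the large-$t$ contribution of $R_2$ unaddressed. Second, the low-frequency piece $\tphi(L)F(\tphi(L)f)$ (the paper's term $I$) appears automatically from the truncated FTC and must be estimated via Proposition \ref{prop:compoSobolev}; it does not show up at all in your scheme.

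If you restore the outer $\tphi(tL)$, apply the chain rule $tL\,F(u)=F'(u)\,tLu-F''(u)\,t|Xu|^2$ rather than the product Leibniz rule, cut the integral at $t=1$, and then insert the comparison $F'(\tphi(tL)f)\leftrightarrow\tphi(tL)F'(f)$ to produce the Taylor remainders, you land exactly on the paper's five-term decomposition $w=I+II+III+IV+V$; your subsequent $L^\infty$/square-function estimates (Proposition \ref{prop:Lp}, Corollary \ref{cor:Li}, Proposition \ref{prop:cara}) are then the correct tools and are used by the paper in essentially the same way.
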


We follow the proof in \cite{cm, meyer, bony}.

\begin{proof} Let us refer the reader to the operators $\phi(tL)$ and $\psi(tL)$, defined in Subsection \ref{subsec:def}: $\psi(x)=c_0 x^N e^{-x}(1-e^{-x})$, $\phi$ is its primitive vanishing at infinity. Let us write $\widetilde{\psi}(z)=z^{-1} \psi(z)$ and $\widetilde{\phi}$ its primitive vanishing at infinity. Moreover, these functions are normalized by the suitable constant $c_0$ such that $\tphi(0)=1$.

It comes
$$ f= \lim_{t\rightarrow 0} \widetilde{\phi}(tL)(f)$$
and so we decompose 
$$ F(f) = \tphi(L) F(\widetilde{\phi}(L) f) - \int_{0}^1 \frac{d}{dt} \tphi(tL) F(\widetilde{\phi}(tL)f) dt.$$                                                                                                                                                                                                                  
 Since
 \begin{align*}
  t \frac{d}{dt} \tphi(tL) F(\widetilde{\phi}(tL)f) & = tL \tphi'(tL) F(\tphi(tL) f) + \tphi(tL) \left[ (tL \tphi'(tL) f)  F'(\widetilde{\phi}(tL) f)\right] \\
  & = \phi'(tL) F(\tphi(tL) f) + \tphi(tL) \left[ (\phi'(tL) f)  F'(\widetilde{\phi}(tL) f)\right],
 \end{align*}
we get
\begin{align*} F(f) & = \tphi(L) F(\widetilde{\phi}(L) f) - \int_0^1   \tphi'(tL) tL [F(\tphi(tL) f)] + \tphi(tL) \left[ (\phi'(tL) f)  F'(\widetilde{\phi}(tL) f)\right] \frac{dt}{t} \\
& = \tphi(L) F(\widetilde{\phi}(L) f) - \int_0^1   \tphi'(tL) \left[F''(\tphi(tL) f) |t^{1/2} X \tphi(tL) f|^2 + F'(\tphi(tL) f) tL \tphi(tL) f \right] \\
& \ + \tphi(tL) \left[ (\phi'(tL) f)  F'(\widetilde{\phi}(tL) f\right] \frac{dt}{t},
\end{align*}
where we used the differentiation rule for the composition with the vector fields $X=(X_1,...,X_\K)$.
We also set
$$ w:= I+ II+ III+ IV+V$$
with 
$$ I:= \tphi(L) F(\widetilde{\phi}(L) f),$$
$$ II:= - \int_0^1   \tphi'(tL) \left[F''(\tphi(tL) f) |t^{1/2} X \tphi(tL) f|^2 \right] \frac{dt}{t},$$
$$ III:= \int_0^1   \tphi'(tL) \left[\left(\tphi(tL) F'(f) - F'(\tphi(tL) f) \right) tL \tphi(tL) f \right] \frac{dt}{t},$$
$$ IV:= \int_0^1   \tphi(tL) \left[ (\phi'(tL) f)  \left( \tphi(tL) F'(f) - F'(\widetilde{\phi}(tL) f) \right)\right] \frac{dt}{t}, $$
and 
$$ V:= \int_1^\infty \tilde{\psi}(tL) \left[tL \tphi(tL)f \, \tphi(tL) F'(f) \right] \frac{dt}{t} + \int_{1}^\infty \tphi(tL) \left[\psi(tL)f \, \tphi(tL) F'(f) \right] \frac{dt}{t}  $$
in order that (\ref{eq:decomposit}) is satisfied. It remains us to check that each term belongs to $ W^{2s-d/p,p}$.

\mb {\bf Step 1:} Term $I$. \\
 Since $f\in W^{s+\epsilon,p}$ then $\widetilde{\phi}(L) f$ belongs to $W^{\rho,p}$ for every $\rho\geq s+\epsilon$ and so Proposition \ref{prop:compoSobolev} yields that  
$$ \left\| \tphi(L) F(\tphi(L) f) \right\|_{W^{2s-d/p,p}} \lesssim \|f\|_{W^{s+\epsilon,p}}.$$

\mb {\bf Step 2:} Term $V$. \\
We only treat the first term in $V$ (the second one can be similarly estimated).
Using duality, we have with some $g\in L^{p'}$ and for $\alpha\in \{0,2s-d/p\}$ since $\alpha\geq0$ and $t\geq 1$
\begin{align*}
\| L^{\alpha/2} V\|_{L^p} & \leq  \int \int_1^\infty \left|(tL)^{\alpha/2}\tilde{\psi}(tL^*)g\right| \left|tL \tphi(tL)f \, \tphi(tL) F'(f) \right| \frac{dtd\mu}{t} \\
& \lesssim \left\| \left( \int_1^\infty \left|tL \tphi(tL)f \, \tphi(tL) F'(f) \right|^2 \frac{dt}{t} \right)^{1/2} \right\|_{L^p} \\
& \lesssim \| f\|_{L^p} \sup_{t\geq 1} \|\tphi(tL) F'(f) \|_{L^\infty},
\end{align*}
where we used the boundedness of the square functional. Then we conclude since $\tphi(tL) F'(f) $ is uniformly bounded by $\|F'(f)\|_{L^\infty}$ which is controlled by $\|f\|_{W^{s,p}}$ (due to Sobolev embedding with $s>d/p$ and Proposition \ref{prop:compoSobolev}).

Indeed our problem is to gain some extra regularity (from $s$ to $2s-d/p$) so the main difficulty relies on the study of the ``high frequencies'' and not on the lower ones. 

\mb {\bf Step 3:} Term $II$. \\

By duality and previous arguments, we get
$$ \|II\|_{W^{2s-d/p,p}} \lesssim \left\| \left( \int_0^1 t^{-2s+d/p}  |t^{1/2} X \tphi(tL) f|^4  \frac{dt}{t} \right)^ {1/2} \right\|_{L^p}+\left\| \left( \int_0^1  |t^{1/2} X \tphi(tL) f|^4  \frac{dt}{t} \right)^{1/2} \right\|_{L^p}$$
where we decomposed the norm with its homogeneous and its inhomogeneous parts and then used uniform boundedness of $F''(\tphi(tL) f)$.
Since (using $L^p$-boundedness of the Riesz transforms, see Assumption \ref{ass:riesz} and Sobolev embedding)
\begin{align*}
 \| X \tphi(tL) f| \|_{L^\infty} & \lesssim \| X \tphi(tL) f\|_{W^{d/p+\epsilon,p}} \\
  &  \lesssim  \| L ^{1/2} \tphi(tL) f\|_{L^p} + \| L^{1/2+d/{2p}+\epsilon/2} \tphi(tL) f\|_{L^p} \\
  & \lesssim  t^{s/2-1/2} \|f\|_{W^{s,p}} + t^{s/2-d/{2p}-\epsilon/2-1/2} \|f\|_{W^{s,p}} \\
  & \lesssim  t^{s/2-d/{2p}-\epsilon/2-1/2} \|f\|_{W^{s,p}}
\end{align*}
where we used $t< 1$.
Finally it comes, 
\begin{align*}
 \|II\|_{W^{2s-d/p,p}} & \lesssim \left\| \left( \int_0^1 t^{-s-\epsilon}    |t^{1/2} X \tphi(tL) f|^2  \frac{dt}{t} \right)^ {1/2} \right\|_{L^p}
& \lesssim \left\| \left( \int_0^1 |(t L)^{1/2-(s+\epsilon)/2} \tphi(tL) L^{(s+\epsilon)/2}f|^2  \frac{dt}{t} \right)^ {1/2} \right\|_{L^p} \\
& \lesssim\|f\|_{W^{s+\epsilon,p}},
 \end{align*}
where we used $s<1$ and the boundedness of the square functional.

\mb {\bf Step 4:} Terms $III$ and $IV$. \\
For these terms, we follow the reasoning of the Appendix of \cite{cm}.
Using the finite increments Theorem, we have
$$ \left| \tphi(tL) F'(f) - F'(\tphi(tL) f) \right| \leq \left| (\tphi(tL) -I)F'(f)\right|+ \left|(\tphi(tL)-I)f \right|.$$
So using similar arguments as previously, we get (with $h=F'(f)$ and $h=f$)
\begin{align*}
\| III \|_{W^{2s-d/p,p}} & \lesssim \left\| \left(\int_0^1 \left| \tphi(tL) F'(f) - F'(\tphi(tL) f))\right|^2\left| tL \tphi(tL) f \right|^2 t^{-s+d/(2p)} \frac{dt}{t}\right)^{1/2} \right\|_{L^p} \\
& \lesssim \left\| \left(\int_0^1 \left| (\tphi(tL) -I)h\right|^2\left| tL \tphi(tL) f \right|^2 t^{-2s+d/p} \frac{dt}{t}\right)^{1/2} \right\|_{L^p} \\
& \lesssim \left\| \left(\int_0^1 \left| (\tphi(tL) -I)h\right|^2 t^{-s} \frac{dt}{t}\right)^{1/2} \right\|_{L^p} \\
& \lesssim \left\| \left(\int_0^1 \left| \frac{\tphi(tL) -I)}{(tL)^{s} } L^{s} h\right|^2  \frac{dt}{t}\right)^{1/2} \right\|_{L^p} \\
& \lesssim \left\| L^{s} h\right\|_{L^p} \lesssim \|h\|_{W^{s,p}}
\end{align*}
where we used $s<2$, which yields
\be{eq:aaa} \left\| tL \tphi(tL) f \right\|_{L^\infty} \leq \left\| (tL)^{1-s/2} \tphi(tL) (tL)^{s/2}f \right\|_{L^\infty} \lesssim 
t^{-d/(2p)} \left\|(tL)^{s/2}f \right\|_{L^p} \lesssim t^{-d/(2p)+s/2} \|f\|_{W^{s,p}} \ee
and the boundedness of the square functional associated to the function $\frac{\tphi(z) -1}{z^{s}}$ which is holomorphic and vanishing at $0$ and at $\infty$ (see Proposition \ref{prop:Lp}).
We conclude the estimate of $III$ since $h=f$ or $h=F'(f)$ belongs to $W^{s,p}$.
The term $IV$ is similarly estimated.
\end{proof}

\begin{cor}
The diagonal term $\texttt{Rest}$ (defined in Corollary \ref{cor:decomp}) is bounded from $L^p \times L^q$ into $L^r$
as soon as $p,q\in(1,\infty]$ with $0<\frac{1}{r}:=\frac{1}{p}+\frac{1}{q}$. Moreover for $p\in(1,\infty)$, $\epsilon>0$ as small as we want and $s>d/p$ (with $s<(N-2)/2$), then 
$$ \left\| \texttt{Rest}(f,g) \right\|_{W^{2s-d/p, p}} \lesssim \|f\|_{W^{s+\epsilon,p}} \|g\|_{W^{s+\epsilon,p}}.$$
\end{cor}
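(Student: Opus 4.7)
\ For the Lebesgue bound I would proceed by duality: pairing $\texttt{Rest}(f,g)$ with $h\in L^{r'}$ and applying Fubini gives
\[
|\langle \texttt{Rest}(f,g),h\rangle|\leq \int\!\!\int_0^\infty |\tilde\psi(tL^*)h|\,|t^{1/2}X\tphi(tL)f|\,|t^{1/2}X\tphi(tL)g|\,\tfrac{dt}{t}\,d\mu.
\]
The pointwise bound (\ref{eq:pointwise}) yields $\sup_{t>0}|\tilde\psi(tL^*)h|\leq C\M(h)$; Cauchy--Schwarz in $t$ combined with the $L^p$-boundedness of the square functional $S(u):=\bigl(\int_0^\infty |t^{1/2}X\tphi(tL)u|^2\tfrac{dt}{t}\bigr)^{1/2}$ (Remark \ref{rem:square} and Proposition \ref{prop:Lp}), followed by H\"older's inequality and the maximal theorem, then give $|\langle \texttt{Rest}(f,g),h\rangle|\lesssim \|h\|_{L^{r'}}\|f\|_{L^p}\|g\|_{L^q}$ for $p,q\in(1,\infty)$. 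The boundary cases $p=\infty$ or $q=\infty$ follow by replacing one square function by the supremum $\sup_t|\tphi(tL)\cdot|\leq \M(\cdot)$, exactly as in \cite{B2}.

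For the Sobolev estimate, set $\sigma:=2s-d/p>0$. By Proposition \ref{prop:equivalence} it suffices to control $\|L^{\sigma/2}\texttt{Rest}(f,g)\|_{L^p}$, the $L^p$ part being already handled by the first step together with $\|g\|_{L^\infty}\lesssim \|g\|_{W^{s+\epsilon,p}}$ (Corollary \ref{cor:Li}). Repeating the duality above but absorbing $L^{\sigma/2}$ into the symbol $\overline\psi(z):=z^{\sigma/2}\tilde\psi(z)$ (still holomorphic and vanishing at $0$ and at $\infty$ since $s<(N-2)/2$), and using Proposition \ref{prop:Lp} to bound the $L^{p'}$-square function of $\overline\psi(tL^*)h$ by $\|h\|_{L^{p'}}$, the problem reduces to
\[
\Bigl\|\Bigl(\int_0^\infty t^{-\sigma}|t^{1/2}X\tphi(tL)f|^2\,|t^{1/2}X\tphi(tL)g|^2\,\tfrac{dt}{t}\Bigr)^{1/2}\Bigr\|_{L^p}\lesssim \|f\|_{W^{s+\epsilon,p}}\|g\|_{W^{s+\epsilon,p}}.
\]

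I would treat this quadratic functional exactly as in Step 2 of the proof of Theorem \ref{thm:linearization}, placing the $g$-gradient in $L^\infty$ uniformly in $t$ and splitting $t\in(0,1]$ and $t>1$. For $t\leq 1$ the Sobolev embedding $W^{d/p+\epsilon',p}\hookrightarrow L^\infty$ and the $L^p$-boundedness of the local Riesz transforms (Assumption \ref{ass:riesz}) yield $\|t^{1/2}X\tphi(tL)g\|_{L^\infty}\lesssim t^{(s+\epsilon-d/p-\epsilon')/2}\|g\|_{W^{s+\epsilon,p}}$; the exponent arithmetic $-\sigma+s+\epsilon-d/p-\epsilon'=-s+\epsilon-\epsilon'$ together with $X={\mathcal R}L^{1/2}$ and the identity $t^aL^a=(tL)^a$ transforms the remaining $f$-integral into
\[
\int_0^1 \bigl|{\mathcal R}\,(tL)^{1/2-(s-\epsilon+\epsilon')/2}\tphi(tL)\bigl[L^{(s-\epsilon+\epsilon')/2}f\bigr]\bigr|^2\,\tfrac{dt}{t},
\]
whose $L^p$-norm is controlled by $\|L^{(s-\epsilon+\epsilon')/2}f\|_{L^p}\leq \|f\|_{W^{s+\epsilon,p}}$ via Proposition \ref{prop:Lp} and the $L^p$-boundedness of ${\mathcal R}$, on choosing $\epsilon'\leq 2\epsilon$. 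The range $t>1$ is simpler: $t^{-\sigma}$ together with the exponential decay of $\tphi(tL)$ makes the integrand pointwise absolutely convergent, and crude $L^p$ estimates for $f$ and $g$ suffice. The decisive obstacle is the positivity $\tfrac12-(s-\epsilon+\epsilon')/2>0$ required for $(tL)^{\alpha}\tphi(tL)$ to be a Hörmander-type symbol covered by Proposition \ref{prop:Lp}: this is precisely where the hypothesis $s<1$ enters, and it mirrors the same limitation of the paralinearization Theorem \ref{thm:linearization}.
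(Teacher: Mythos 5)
Your approach is correct in spirit and the core arithmetic of the Sobolev estimate (bounding one gradient factor in $L^\infty$ via Sobolev embedding and Riesz transforms, extracting $L^{(s-\epsilon+\epsilon')/2}$ to form a vanishing symbol, noting that $\tfrac12-(s-\epsilon+\epsilon')/2>0$ is exactly where $s<1$ enters) faithfully mirrors Step~2 of the proof of Theorem~\ref{thm:linearization}. But it is a genuinely different route from the paper's. The paper proves this corollary in two lines by polarization: apply Theorem~\ref{thm:linearization} with $F(u)=u^2$ to $u=f+g$ and $u=f-g$; since $\Pi$ is bilinear, expanding $\Pi_{2(f+g)}(f+g)-\Pi_{2(f-g)}(f-g)=4\Pi_g(f)+4\Pi_f(g)$ and comparing with the decomposition $fg=\Pi_g(f)+\Pi_f(g)+\texttt{Rest}(f,g)$ of Corollary~\ref{cor:decomp} yields $\texttt{Rest}(f,g)=\tfrac14(w_1-w_2)$, hence $\texttt{Rest}(f,g)\in W^{2s-d/p,p}$ directly, with no new estimates. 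Your direct computation rederives what the polarization trick packages for free, and consequently your range parameter $\epsilon'$ and the condition $s<(N-2)/2$ appear as ad hoc side-conditions rather than being inherited cleanly from the theorem.

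Two smaller points. First, for the Lebesgue bound the shortest argument is simply $\texttt{Rest}(f,g)=fg-\Pi_g(f)-\Pi_f(g)$, then H\"older for $fg$ and the Lebesgue boundedness theorem for $\Pi$; this covers $p=\infty$ or $q=\infty$ without any extra work. Second, your treatment of those endpoint cases by ``replacing one square function by $\sup_t|\tphi(tL)\cdot|\leq\M(\cdot)$'' is not quite what is needed: the factor in $\texttt{Rest}$ is $t^{1/2}X\tphi(tL)g$, not $\tphi(tL)g$, and the stated assumptions provide square-function estimates for $t^{1/2}\nabla e^{-tL}$ but no pointwise kernel bound from which a maximal-function domination would follow. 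So as written that sentence hides a real gap, which the algebraic reduction to the $\Pi$-theorem avoids entirely.
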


\begin{proof} Apply Theorem \ref{thm:linearization} to the quantities $f+g$ and $f-g$ with $F(u):=u^2$. Then the polarization formulas give that $\texttt{Rest}(f,g)$ has the same regularity has $w$ in Theorem \ref{thm:linearization}.
\end{proof}

\begin{rem} Usually, we have a gain of regularity of order $s-d/p$ for this quantity. Here we have a gain of $s-d/p-\epsilon$ for every $\epsilon>0$, as smal as we want. 
\end{rem}

\begin{rem} \label{rem:imp} Of course, the assumption $s\in (d/p,1)$ can be seen as very constraining. We want to explain here how it seems to us possible to weaken that point. First we point that this ``technical'' difficulty is new since it does not appear in the Euclidean situation.

Legitimating the definition of the paraproducts (just before Definition \ref{def:paraproduit} and Remark \ref{rem:imp2}), we have developed  $(tL)^{1}(\tphi(tL) f \tphi(tL) F'(f))$ using the Leibniz rule of the Laplacian. Now for $M<< N$, it is possible to do the same operation and develop $(tL)^{M}(\tphi(tL) f \tphi(tL) F'(f))$. We also obtain a sum of different terms involving higher order differential operators than previously.
Similarly, in the beginning of the previous proof (obtaining the several terms $I,II,III,IV$ and $V$), we have developed $(tL)^1 F(\tphi(tL) f)$. Taking the same exponent $M$, we can expand $(tL)^M F(\tphi(tL) f)$ in different quantities. Indeed, we find different multilinear differential operators $T_{j,t}$ such that
\begin{align*}
 (tL)^M F(\tphi(tL) f) = & \sum_j T_{j,t}(\tphi(tL) f,F'(\tphi (tL)f), ..., F^{(2M-1)}(\tphi (tL)f)) \\
 & + F^{(2M)}(\tphi(tL) f) |t^{1/2} X \tphi(tL) f|^{2M}.
\end{align*}

By this way, we may define a new kind of paralinearization and prove that for $f\in W^{s+\epsilon,p}$ with large $s>d/p$ then  $$ F(f) = \sum_j \int_0^\infty T_{j,t}(\tphi(tL) f, \tphi (tL)F'(f), ..., \tphi (tL) F^{(2M-1)}(f)) \frac{dt}{t} + w $$
with $w\in W^{2s-d/p,p}$.
By this way, the rest $w$ should be decomposed as previously, in the corresponding term $II$, it will appear $|t^{1/2} X \tphi(tL) f|^{M-1}$ such that the corresponding square functional will be bounded as soon as $(s-d/p)(M-1)>s-1$.
The other terms may be bounded (as we did in Step 4) since they will have quantities as $|\tphi (tL)F^{(k)}(f) - F{(k)}(\tphi(tL) f)|$ and other differential operators on $\tphi(tL) f$. The key idea is that now the multilinearity of the operator $T_{j,t}$ will be sufficiently high to involve sufficiently such differential terms, each of them bringing a positive power of $t$ as shown in (\ref{eq:aaa}).

By this way, it is also possible to get a paralinearization result for high regularity $s>d/p$ and $s<<N$ (by taking a large exponent $N$), by defining new multilinear operators involving the derivatives $F^{(k)}(f)$.
\end{rem}

As explained in \cite{cm} (see its Appendix I.3, theorem 38), a vector-valued version of the preceding result allows us to prove the following one:
\begin{thm} \label{thm:linearization2} Consider $s\in(d/p,1)$, $f\in W^{s+k,p}$ and a smooth function $F(x,u_1,\cdots u_N) \in C^\infty(M\times \R^N)$ with $F(x,0, \cdots, 0)=0$. Then by identifying $\{1,\cdots, N\}$ with a set of multi-indices $\{\alpha_1,\cdots \alpha_N\}$ (and $|\alpha_i|\leq k$), we can build
\be{eq:Fbis} x \in M \rightarrow F(x, X_{\alpha_1} f(x), \cdots X_{\alpha_N}f(x)) \ee
which belongs to $W^{s,p}$. Moreover, 
\begin{equation} 
F(x,X_{\alpha_1}f(x),\cdots,X_{\alpha_N} f(x) ) = \sum_{i=1}^N \Pi_{[\partial_{u_i} F] (x,X_{\alpha_1}f(x),\cdots,X_{\alpha_N} f(x)) } ( X_{\alpha_i} f )(x) + w(x)  \label{eq:decompositbis} \end{equation}
with $w\in W^{2s-d/p,p}$. 
\end{thm}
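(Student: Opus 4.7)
The plan is to reduce the statement to a vector-valued and $x$-dependent version of Theorem~\ref{thm:linearization}, which in turn is proved by mimicking the five-term decomposition of its scalar proof. As a preliminary step, set $g_i := X_{\alpha_i}f$ for $i=1,\dots,N$. Since $|\alpha_i|\le k$ and $f\in W^{s+k,p}$, Assumption~\ref{ass:riesz} together with the Sobolev characterization of Section~\ref{sec:sobo} (via iterated Riesz transforms) yields $g_i\in W^{s+k-|\alpha_i|,p}\subset W^{s,p}$. Because $s>d/p$, Corollary~\ref{cor:Li} gives $g_i\in L^\infty$, so the composite function $G(x):=F(x,g_1(x),\dots,g_N(x))$ is pointwise well-defined and a vector-valued analogue of Proposition~\ref{prop:compoSobolev} (applied with $F(x,\cdot)$ whose smoothness in $x$ is uniform on bounded sets) shows $G\in W^{s,p}$.

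For the decomposition, I would set
\[
G_t(x) := F\bigl(x,\tphi(tL)g_1(x),\dots,\tphi(tL)g_N(x)\bigr),
\]
so that $\tphi(tL)G_t \to G$ as $t\to 0^+$ (by $\tphi(0)=1$ and continuity of the $g_i$'s modulo an $L^p$-approximation argument). Then the fundamental theorem of calculus gives
\[
G = \tphi(L)\,G_1 \;-\;\int_0^1 \frac{d}{dt}\bigl[\tphi(tL)\,G_t\bigr]\,dt.
\]
Expanding the time derivative as in the scalar proof yields $\frac{d}{dt}[\tphi(tL)G_t] = \tphi'(tL)\,L G_t + \tphi(tL)\,\partial_t G_t$, and now the multivariable chain rule produces
\[
\partial_t G_t(x) = \sum_{i=1}^N [\partial_{u_i}F]\bigl(x,\tphi(tL)g_1,\dots\bigr)\,L\tphi'(tL) g_i(x).
\]
Similarly, applying the Leibniz rule for $L=-\sum X_k^2$ to $G_t$ produces the expected Hessian contribution $\sum_{i,j}[\partial_{u_i}\partial_{u_j}F]\langle t^{1/2}X\tphi(tL)g_i,t^{1/2}X\tphi(tL)g_j\rangle$ together with first-order terms $\sum_i[\partial_{u_i}F]\, tL\tphi(tL)g_i$ and, crucially, \emph{extra} terms involving the $x$-derivatives $X_k^{(x)}F$ and $[X_k^{(x)}\partial_{u_i}F]$. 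Collecting, as in the scalar proof, the pieces that match the paraproduct definition against those that will become the error $w$, one arrives at (\ref{eq:decompositbis}) modulo a remainder $w=I+II+III+IV+V+\text{(}x\text{-derivative terms)}$.

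For the estimation of each piece in $W^{2s-d/p,p}$, the five scalar terms are handled exactly as in the proof of Theorem~\ref{thm:linearization}: term $I$ uses smoothing of $\tphi(L)$ plus the composition estimate of Proposition~\ref{prop:compoSobolev}; term $V$ exploits that the integration range $t\ge 1$ is ``low frequency''; term $II$ uses Riesz transform bounds and the pointwise control of $\|X\tphi(tL)g_i\|_\infty$ via Sobolev embedding together with $s<1$; and terms $III,IV$ use the finite increment trick $|\tphi(tL)h-h|\le|(\tphi(tL)-I)h|$ combined with square-function boundedness of the holomorphic multiplier $(\tphi(z)-1)/z^s$ and the key pointwise bound (\ref{eq:aaa}) for $tL\tphi(tL)g_i$. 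The new $x$-derivative terms are of the same structural form but with $F^{(\alpha)}$ replaced by a $C^\infty$ function obtained by applying $X_k^{(x)}$, so exactly the same estimates apply (uniform bounds on all derivatives of $F$ on the compact range of $(g_1,\dots,g_N)$ suffice, thanks to $g_i\in L^\infty$).

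The main obstacle, as I see it, is bookkeeping rather than conceptual: the vector-valued chain rule combined with the $x$-dependence of $F$ multiplies the number of terms substantially, and one must verify that every new term that appears when $L$ crosses the composite $F(x,\tphi(tL)g_1,\dots,\tphi(tL)g_N)$ either is reabsorbed into one of the paraproducts $\Pi_{[\partial_{u_i}F](\cdot)}(X_{\alpha_i}f)$ or falls into one of the five archetypes whose estimate is already carried out. The observation that makes this work, as emphasized in Remark~\ref{rem:imp}, is that each extra differentiation against $\tphi(tL)g_i$ brings a positive power $t^{s-d/p-\epsilon}$ (by Sobolev embedding and (\ref{eq:aaa})), which produces the extra $s-d/p-\epsilon$ regularity needed to reach $W^{2s-d/p,p}$.
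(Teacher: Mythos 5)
Your proposal matches the approach the paper intends: the paper does not write out a proof of Theorem~\ref{thm:linearization2} but simply defers to Coifman--Meyer (Appendix I.3, Th.~38 of \cite{cm}), stating that a vector-valued and $x$-dependent run of the proof of Theorem~\ref{thm:linearization} gives the result. Your sketch carries out exactly that programme -- set $g_i=X_{\alpha_i}f$, write $G_t=F(x,\tphi(tL)g_1,\dots,\tphi(tL)g_N)$, integrate $\frac{d}{dt}[\tphi(tL)G_t]$ over $(0,1)$, expand the time derivative with the chain rule and the Leibniz rule for $L=-\sum X_k^2$, collect the paraproduct pieces, and estimate the five archetypal error terms plus the new $x$-derivative terms in $W^{2s-d/p,p}$ exactly as in Steps~1--4 of the scalar proof. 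This is precisely the argument the paper is invoking, so the approaches coincide.

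One point worth flagging, though it is as much an imprecision in the paper's statement as in your proof: Theorem~\ref{thm:linearization} is stated for $f\in W^{s+\epsilon,p}$ with $\epsilon>0$, and this $\epsilon$ is genuinely used in Step~3 (the term $II$), where the final square-function bound requires $L^{(s+\epsilon)/2}f\in L^p$. In your setup, if some multi-index attains $|\alpha_i|=k$ then $g_i=X_{\alpha_i}f$ lands only in $W^{s,p}$, with no $\epsilon$ to spare. For the Hessian contributions $\langle X\tphi(tL)g_i,X\tphi(tL)g_j\rangle$ one can usually route the extra regularity through the factor with a lower-order multi-index (which does carry the $\epsilon$), but the diagonal piece with both factors of top order deserves an explicit word. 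Since the paper itself leaves this implicit, your omission merely mirrors the source, but if you want a self-contained argument you should either note that at least one index in each Hessian pair can be taken of order $<k$ after a harmless perturbation, or strengthen the hypothesis to $f\in W^{s+k+\epsilon,p}$.
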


\section{Propagation of low regularity for solutions of nonlinear PDEs}

As in the Euclidean case, paralinearization is a powerful tool to study nonlinear PDEs and to prove the propagation of regularity for solutions of such PDEs. Let us try to present some results in this direction with this new setting of Riemannian manifold.

Let us consider a specific case of nonlinear PDEs for simplifying the exposition : let $F(x,u_1,\cdots u_{\K+1}) \in C^\infty(M\times \R^{\K+1})$ be a smooth function with $F(x,0, \cdots, 0)=0$. Then by identifying $\{1,\cdots, \K+1\}$ with a set of multi-indices $\{0,1,\cdots \K\}$, we deal with the function 
\be{eq:F} F(f,Xf):=x \in M \rightarrow F(x, f(x),X_1f(x),  \cdots X_{\K}f(x)) \ee
for some function $f$. That corresponds to the case $N=\K+1$, $k=1$ with $\alpha_1=0$ and $\alpha_{i}=X_{i-1}$ for $i=2,\cdots N+1$ in (\ref{eq:F}).

\begin{thm} Consider $s\in(d/p,1)$, $f\in W^{s+1,p}$ and a smooth function (as above) $F(x,u_1,\cdots u_N) \in C^\infty(M\times \R^N)$ with $F(x,0, \cdots, 0)=0$ and assume that $f$ is a solution of
$$ F(f,Xf)(x)=0.$$
Consider the vector field
$$ \Gamma(x) := \sum_{i=2}^{\K+1} [\partial_{u_i} F] (x,f(x), X_{1}f(x),\cdots,X_{\K} f(x))  X_{i}.$$
Then, locally around each point $x_0\in M$ in ``the direction $\Gamma$'', the solution $f$ has a regularity $W^{s+1+\rho}$
for every $\rho>0$ such that
$$ \rho<\min\{1,s-d/p\}.$$
 In the sense that 
 $$ U(f):=\sum_{i=2}^{\K+1} [\partial_{u_i} F] (x,f(x), X_{1}f(x),\cdots,X_{\K} f(x))  L^{(s+\rho)/2} X_{i} (f) \in L^p.$$
 \end{thm}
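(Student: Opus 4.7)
The plan is to paralinearize the equation via Theorem \ref{thm:linearization2}, push the resulting regularity estimate through the fractional power $L^{(s+\rho)/2}$, and then identify the resulting paradifferential object with $U(f)$.

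Set $b(x):=\partial_{u_1}F(x,f,Xf)$ and $a_i(x):=\partial_{u_{i+1}}F(x,f,Xf)$ for $i=1,\ldots,\K$. Applying Theorem \ref{thm:linearization2} (with $k=1$, $N=\K+1$, $\alpha_1=\emptyset$ and $\alpha_{i+1}=\{i\}$) to $F(f,Xf)=0$ gives
$$
\Pi_b(f) + \sum_{i=1}^{\K}\Pi_{a_i}(X_i f) = -w, \qquad w\in W^{2s-d/p,p}.
$$
By Proposition \ref{prop:compoSobolev} and Corollary \ref{cor:Li}, the coefficients $b,a_i$ lie in $L^\infty\cap W^{s,p}$; Theorem \ref{thm:sobolev} then yields $\Pi_b(f)\in W^{s+1,p}\subset W^{2s-d/p,p}$ (the inclusion uses $s<1$), so $\sum_i \Pi_{a_i}(X_i f)\in W^{2s-d/p,p}$. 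Applying $L^{(s+\rho)/2}$, which is legitimate since $s-d/p-\rho>0$ by the hypothesis $\rho<s-d/p$, and invoking the commutation $L^\beta\Pi_a = \bar\Pi_a L^\beta$ established in the proof of Theorem \ref{thm:sobolev} (with $\beta=(s+\rho)/2<1$, so that the modified $\bar\psi(z)=z^\beta\phi(z)$ and $\widetilde\psi(z)=z^{-\beta}\psi(z)$ remain in the admissible holomorphic class when $N$ is large), one arrives at
$$
\sum_{i=1}^{\K}\bar\Pi_{a_i}\bigl(L^{(s+\rho)/2}X_i f\bigr)\in L^p.
$$

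Setting $h_i:=L^{(s+\rho)/2}X_i f$ and applying the $\bar\Pi$-analogue of Corollary \ref{cor:decomp}, $a_i h_i = \bar\Pi_{a_i}(h_i)+\bar\Pi_{h_i}(a_i)+\overline{\texttt{Rest}}(a_i,h_i)$, one can write
$$
U(f) = \sum_i \bar\Pi_{a_i}(h_i) + \sum_i \bar\Pi_{h_i}(a_i) + \sum_i \overline{\texttt{Rest}}(a_i,h_i),
$$
the first sum being in $L^p$ by the previous step. The main obstacle is to show that the last two sums also lie in $L^p$: this amounts to extending the paraproduct boundedness of Theorem \ref{thm:sobolev} to one argument in the negative Sobolev space $W^{-\rho,p}$. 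The strategy I would follow is to factor $\psi(tL)a_i = t^{s/2}\widetilde\psi_s(tL)(L^{s/2}a_i)$ with $L^{s/2}a_i\in L^p$, and $\tphi(tL)h_i = t^{-\rho/2}\bar\phi_\rho(tL)(L^{-\rho/2}h_i)$ with $L^{-\rho/2}h_i\in L^p$, inside the integral defining the paraproduct; the net factor $t^{(s-\rho)/2}$ is integrable near $0$ thanks to the positive gap $s-\rho>0$ guaranteed by $\rho<s-d/p<s$, and combined with the square-function estimates of Proposition \ref{prop:Lp} and the pointwise control of $\tphi(tL)$ by the maximal function, this should produce the desired $L^p$ bound. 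The hypothesis $\rho<1$ is what keeps all the auxiliary holomorphic multipliers in the admissible class with vanishing behavior at $0$ and $\infty$.
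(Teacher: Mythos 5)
Your first half is sound and matches the paper's opening moves: paralinearize the equation via Theorem~\ref{thm:linearization2}, observe that $\Pi_b(f)\in W^{s+1,p}\subset W^{2s-d/p,p}$, conclude $\sum_i\Pi_{a_i}(X_if)\in W^{2s-d/p,p}\subset W^{s+\rho,p}$, then move $L^{(s+\rho)/2}$ inside to get $\sum_i\bar\Pi_{a_i}\bigl(L^{(s+\rho)/2}X_if\bigr)\in L^p$. This is exactly the paper's quantity $T_F(f)$.

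The second half has a genuine gap. You invoke a ``$\bar\Pi$-analogue of Corollary~\ref{cor:decomp}'', namely $a_ih_i=\bar\Pi_{a_i}(h_i)+\bar\Pi_{h_i}(a_i)+\overline{\texttt{Rest}}(a_i,h_i)$, but no such identity holds with a controllable remainder. The decomposition $fg=\Pi_g(f)+\Pi_f(g)+\texttt{Rest}(f,g)$ comes from a single resolution of identity ($\phi'=\psi$, $z\widetilde\phi'=\phi'$) whose three factors are symmetric under permutation. The twisted paraproduct $\bar\Pi_g(f)=L^\beta\Pi_g(L^{-\beta}f)$ uses \emph{different} outer and inner multipliers $\bar\psi(z)=z^\beta\phi(z)$ and $\widetilde\psi(z)=z^{-\beta}\psi(z)$ that are not derivative/primitive of each other, so $\bar\Pi_{a_i}(h_i)+\bar\Pi_{h_i}(a_i)$ is in no natural relation to the product $a_ih_i$; the residual $\overline{\texttt{Rest}}$ you would have to define is not a diagonal ``carr\'e du champ'' term and carries no cancellation. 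Even granting the decomposition, your sketch only addresses one of the two remaining sums, and the negative-Sobolev paraproduct bound you outline for $\bar\Pi_{h_i}(a_i)$ (factoring $t^{s/2}$ out of $\widetilde\psi(tL)a_i$ and $t^{-\rho/2}$ out of $\tphi(tL)h_i$) would need the Cauchy--Schwarz weights in $t$ to balance against the outer square function, which is not immediate and is left unverified.

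The paper avoids the product decomposition entirely by a ``frozen coefficient'' device: since $\widetilde\Pi_c(u)=c\,u$ for any constant $c$, one has $U(f)(x)=\sum_i\widetilde\Pi_{a_i(x)}(h_i)(x)$ pointwise, and therefore $T_F(f)(x)-U(f)(x)=\sum_i\widetilde\Pi_{\lambda_{i,x}}(h_i)(x)$ with $\lambda_{i,x}(\cdot)=a_i(\cdot)-a_i(x)$. This reduces the problem to estimating a paraproduct whose symbol is a \emph{first-order oscillation}, which is then bounded directly from the kernel decay (no Corollary~\ref{cor:decomp} needed): after a Cauchy--Schwarz in $t$ and the Fefferman--Stein inequality, the oscillation factor is precisely the fractional functional $\bigl(\int_0^\infty t^{-1}(\aver{B(x,t^{1/2})}|H(z)-H(x)|\,d\mu(z))^2\frac{dt}{t}\bigr)^{1/2}$ controlled by $\|H\|_{W^{1,r}}$ via Proposition~\ref{prop:cara}, and the remaining factor is controlled via a square function and Sobolev embedding, using the room $\rho<s-d/p$ to choose the H\"older exponents. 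This is the key idea your proposal is missing; you would do well to replace the $\bar\Pi$-product decomposition with this frozen-coefficient reduction.
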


Such results can be seen as a kind of directional ``Implicit function theorem'', where the regularity of $F(f,Xf)$ implies some directional regularity for $f$ (in the suitable direction, where we can regularly ``invert'' the nonlinear equation). 

\begin{proof}
The previous paralinearization result yields that
$$ \sum_{i=1}^\K \Pi_{[\partial_{u_{i+1}} F] (f,Xf)}(X_i(f)) \in W^{s+\rho,p},$$
which gives 
$$ T_F(f):=\sum_{i=1}^\K \widetilde{\Pi}_{[\partial_{u_{i+1}} F] (f,Xf)}(L^{\alpha} X_i f) \in L^p,$$
where $\widetilde{\Pi}$ is another paraproduct.
Indeed
\begin{align*}
 \widetilde{\Pi}_{b}(a) & =   - \int_{0}^\infty (tL)^\alpha \tilde{\psi}(tL) \left[(tL)^{1-\alpha} \tphi(tL)a \, \tphi(tL)b \right] \frac{dt}{t} \\
  & - \int_{0}^\infty (tL)^\alpha \tphi(tL) \left[ t^{-\alpha} \psi(tL)a \, \tphi(tL)b \right] \frac{dt}{t}, 
\end{align*}
where we have taken the notations of the definition for the initial paraproduct $\Pi$ (see Definition \ref{def:paraproduit}).
Then, we want to compare this quantity to the main one : $U(f) $. So let us examine the difference. Since for every constant $c$, we have
$$ cf = \Pi_{c}(f) = L^\alpha \Pi_c(L^{-\alpha} f) = \widetilde{\Pi}_{c}(f),$$
it comes 
$$U(f)(x) = \sum_{i=1}^\K \widetilde{\Pi}_{[\partial_{u_{i+1}} F] (f(x),Xf(x))}(L^\alpha X_i f)(x),$$
hence
$$ T_F(f)(x) - U(f)(x) = \sum_{i=1}^\K \widetilde{\Pi}_{\lambda_{i,x}}(X_i L^\alpha f)(x)$$
with $\lambda_{i,x}(\cdot)=[\partial_{u_{i+1}} F] (f,Xf)-[\partial_{u_{i+1}} F] (f(x),Xf(x))$.
It remains us to check that for each integer $i$, the function $x\rightarrow \widetilde{\Pi}_{\lambda_{i,x}}(X_i (1+L)^\alpha f)(x)$ belongs to $L^p$.
Let us recall that
\begin{align*}
 \widetilde{\Pi}_{\lambda_{i,x}}(X_i (1+L)^\alpha f)(x) & = - \int_{0}^\infty (tL)^\alpha \tilde{\psi}(tL) \left[(tL)^{1-\alpha} \tphi(tL) X_i L^\alpha f \, \tphi(tL)\lambda_{i,x}  \right] (x) \frac{dt}{t} \\
  & - \int_{0}^\infty (tL)^\alpha \tphi(tL) \left[ t^{-\alpha} \psi(tL) X_i L^\alpha f \, \tphi(tL)\lambda_{i,x} \right](x) \frac{dt}{t}.
 \end{align*}
Let us study only the first term $I$ (the second one beeing similar):
\begin{align*}
 I & := \left|\int_{0}^\infty (tL)^\alpha \tilde{\psi}(tL) \left[ (tL)^{1-\alpha} \tphi(tL) X_i L^\alpha f \, \tphi(tL)\lambda_{i,x}  \right] (x) \frac{dt}{t}\right| \\
  & \lesssim \int_{0}^\infty \int_M \frac{1}{\mu(B(x,t^{-1/2}))}\left(1+ \frac{d(x,y)}{t^{-1/2}}\right)^{-d-\delta} \left|(tL)^{1-\alpha} \tphi(tL) X_i L^\alpha f (y) \right| \left| \tphi(tL)\lambda_{i,x}(y) \right| \frac{d\mu(y) dt}{t} \\
  & \lesssim \int_{0}^\infty  \sum_{j\geq 0} 2^{-j\delta} \aver{C(x,2^jt^{-1/2})} \left|(tL)^{1-\alpha} \tphi(tL) X_i L^\alpha f (y) \right| \left| \tphi(tL)\lambda_{i,x}(y) \right| \frac{d\mu(y) dt}{t}.
\end{align*}
where we set $C(x,2^jt^{-1/2}) = B(x,2^{j+1}t^{-1/2})\setminus B(x,2^jt^{-1/2})$ and by convention $|C(x,2^jt^{-1/2})| = |C(x,2^jt^{-1/2})|$. 
Now, for $y\in B(x,2^j t^{-1/2})$, we have
\begin{align*}
 \left|\tphi(tL) \lambda_{i,x}(y)\right| & \lesssim \frac{1}{\mu(B(y,t^{1/2}))} \int \left(1+\frac{d(y,z)}{t^{1/2}}\right)^{-d-\delta} \left|[\partial_{u_{i+1}} F] (f,Xf)(z) -[\partial_{u_{i+1}} F] (f,Xf)(x) \right| d\mu(z) \\
 & \lesssim  \sum_{k\geq 0} 2^{jd-\delta k} \aver{C(y,2^{k+j} t^{1/2})} \left| H(z) -H(x) \right| d\mu(z)\\
 & \lesssim  \sum_{k\geq 0} 2^{jd-\delta k} \aver{\tilde{C}(x,2^{k+j} t^{1/2})} \left| H(z) -H(x) \right| d\mu(z)\\
 \end{align*}
with $H:=[\partial_{u_{i+1}} F] (f,Xf)$ and $\tilde{C}$ another systems of coronas.
So we get
\begin{align*}
I & \lesssim \sum_{k,j\geq 0} 2^{-k\delta +j(d-\delta)} \int_{0}^\infty \left(\aver{B(x,2^jt^{-1/2})} \left|(tL)^{1-\alpha} \tphi(tL) X_i L^\alpha f (y) \right| d\mu(y) \right) \\
& \hspace{2cm} \left( \aver{B(x,2^{k+j} t^{1/2})} \left| H(z) -H(x) \right| d\mu(z) \right) \frac{dt}{t} \\
& \lesssim \sum_{k,j\geq 0} 2^{-k\delta +j(d-\delta)}  \int_{0}^\infty {\mathcal M}\left[ t^{1/2} \left| (tL)^{1-\alpha} \tphi(tL) X_i L^\alpha f\right| \right](x)  \left( t^{-1/2}\aver{B(x,2^{k+j} t^{1/2})} \left| H(z) -H(x) \right| d\mu(z) \right) \frac{dt}{t}.
\end{align*}
Using Cauchy-Schwartz inequality, we also have
\begin{align*}
I & \lesssim \sum_{k,j\geq 0} 2^{-k\delta+j(d-\delta)} \left(\int_{0}^\infty \M\left[t^{1/2} (tL)^{1-\alpha} \tphi(tL) X_i L^\alpha f\right](x)^2 \frac{dt}{t}\right)^{1/2}  \\
& \hspace{2cm}  \left( \int_0^\infty t^{-1} \left(\aver{B(x,2^{k+j} t^{1/2})} \left| H(z) -H(x) \right| d\mu(z) \right)^ 2 \frac{dt}{t}\right)^{1/2} \\
& \lesssim \sum_{k,j\geq 0} 2^{-k(\delta-1)-j(d-\delta-1)} \left(\int_{0}^\infty \M\left[t^{1/2}(tL)^{1-\alpha} \tphi(tL) X_i L^\alpha f\right](x)^2 \frac{dt}{t}\right)^{1/2} \\
& \hspace{2cm} \left( \int_0^\infty t^{-1} \left(\aver{B(x,t^{1/2})} \left| H(z) -H(x) \right| d\mu(z) \right)^ 2 \frac{dt}{t}\right)^{1/2} \\
& \lesssim \left(\int_{0}^\infty \M\left[t^{1/2}(tL)^{1-\alpha} \tphi(tL) X_i (1+L)^\alpha f\right](x)^2 \frac{dt}{t}\right)^{1/2} \\
& \hspace{2cm} \left( \int_0^\infty t^{-1} \left(\aver{B(x,t^{1/2})} \left| H(z) -H(x) \right| d\mu(z) \right)^ 2 \frac{dt}{t}\right)^{1/2},
\end{align*}
where we have used a change of variables and $\delta>d+1$.
So using  exponents $q,r>p$ (later chosen) such that $\frac{1}{p}= \frac{1}{q}+\frac{1}{r}$, boundedness of the square functional on the one hand and on the other hand Fefferman-Stein inequality for the maximal operator, it comes
$$ \|I\|_{L^p} \lesssim  \|f\|_{W^{2\alpha,q}} \left\| \left( \int_0^\infty t^{-1} \left(\aver{B(x,t^{1/2})} \left| H(z) -H(x) \right| d\mu(z) \right)^ 2 \frac{dt}{t}\right)^{1/2} \right\|_{L^r}.$$
Then, using the characterization of Sobolev norms (using this functional, see Proposition \ref{prop:cara}), we conclude to
$$ \|I\|_{L^p} \lesssim  \|f\|_{W^{2\alpha,q}} \|H\|_{W^{1,r}}.$$
We also chose exponents $q,r$ such that
$$ \frac{1}{q}-\frac{2\alpha}{d}>\frac{1}{p}-\frac{s+1}{d} \quad \textrm{and} \quad \frac{1}{r}-\frac{1}{d}>\frac{1}{p}-\frac{s}{d},$$
which is possible since $\frac{1}{p}<\frac{s-\rho}{d}$ because of the condition on $\rho$.
Then Sobolev embedding (Proposition \ref{prop:Sobolev}) yields that $W^{s+1,p} \hookrightarrow W^{2\alpha,q}$ and 
$ W^{s,p} \hookrightarrow W^{1,r}$.
Finally, the proof is also concluded since we obtain
$$ \|I \|_{L^p} \lesssim \|f\|_{W^{s+1,p}} \|H\|_{W^{s,p}},$$
which is bounded by $f\in W^{s+1,p}$ (due to $H:=[\partial_{u_{i+1}} F] (f,Xf)$ with Proposition \ref{prop:compoSobolev}).
\end{proof}

We let the reader to write the analog results for higher order nonlinear PDEs.

\begin{rem} \label{rem:ass} Let us suppose that the geometry of the manifold allows us to use the following property:
For  $\alpha>0$, the commutators $[X_i,(1+L)^\alpha]$ is an operator of order $2\alpha$, which means that for all $p\in(1,\infty)$ and $s>0$, $[X_i,(1+L)^\alpha]$ is bounded from $W^{s+2\alpha,p}$ to $W^{s,p}$.

This property holds as soon as we can define a suitable pseudo-differential calculus with symbolic rules : in particular, this is the case  of H-type Lie groups, using a notion of Fourier transforms based on  irreductible representations, see \cite{BFG,GS}.

Under this property, we can commute the vector field $X$ with any power of the Laplacian and so with the same statement than in the previous theorem, we obtain that 
$$ \Gamma (1-L)^{\frac{s+\rho}{2}} f \in L^p.$$
This new formulation better describes the fact that $f\in W^{s+\rho+1,p}$ along the vector field $\Gamma$.
 \end{rem}


\begin{thebibliography}{99}

 \bibitem{ambrosio1}
 L.~Ambrosio, M.~Miranda Jr and D.~Pallara, \emph{Special functions of bounded
   variation in doubling metric measure spaces}, Calculus of variations~: topics
   from the mathematical heritage of E. De Giorgi, Quad. Mat., Dept. Math,
   Seconda Univ. Napoli, Caserta \textbf{14} (2004), 1--45.

 \bibitem{A}
 P.~Auscher,
  On necessary and sufficient conditions for ${L}^p$ estimates of
   {R}iesz transforms associated to elliptic operators on ${\R}^n$ and related
   estimates,
 {\it Memoirs of Amer. Math. Soc.} \textbf{186} no.871 (2007).

 \bibitem{ACDH}
 P. Auscher, T. Coulhon, X.T. Duong, and S. Hofmann,
  Riesz transform on manifolds and the heat kernel
 regularity, {\it Ann. Sci. Ec. Nor. Sup.} \textbf{37} (2004), 911--957.

\bibitem{BBR}
N. Badr, F. Bernicot and E. Russ,
\newblock Algebra properties for Sobolev spaces- Applications to semilinear PDE's on manifolds,
\newblock Available at http://arxiv.org/abs/1107.3826.
 
 \bibitem{BFG}
 H. Bahouri, C. Fermanian-Kammerer and I. Gallagher,
 \newblock Phase space analysis and pseudodifferential calculus on the Heisenberg group,
 \newblock {\it Ast{\'e}risque} to appear, Soc. Math. France
 
 \bibitem{BE}
 D. Bakry and M. \'Emery,
 Diffusions hypercontractives,
 {\it Lect. Notes Maths.} \textbf{1123} (1985) XIX: 12, 177-206.

 \bibitem{BZ}
 F.~Bernicot and J.~Zhao, New Abstract Hardy Spaces,
 {\it J. Funct. Anal.} \textbf{255} (2008), 1761-1796.
 
%
%

 \bibitem{B2}
 F. Bernicot, A $T(1)$-Theorem in relation to a semigroup of operators and applications to new paraproducts,
  {\it Trans. of Amer. Math. Soc.} (2012), available at http://arxiv.org/abs/1005.5140.



\bibitem{bony}
J.M. Bony, Calcul symbolique et propagation des singularit\'es pour les \'equations aux d\'eriv\'ees partielles non lin\'eaires, {\it Ann. Scient. E.N.S.} \textbf{14} (1981), 209--246.

 
%
%
 \bibitem{cm} 
 R.R. Coifman and Y. Meyer, Au-del\`a des op\'erateurs pseudo-diffe\'rentiels,
 Ast\'erisque \textbf{57}, Soci\'et\'e Math. de France, 1978.
 
 \bibitem{coifman2}
 R.~Coifman and G.~Weiss, \emph{Analyse harmonique sur certains espaces
   homog\`{e}nes}, Lecture notes in Math. \textbf{242} (1971).
%
%
 \bibitem{C}
 T. Coulhon, It\'eration de Moser et estimation gaussienne du noyau de la chaleur, 
 {\it Journ. Oper. Th.} \textbf{29} (1993), 157--165.
%
 \bibitem{CD1}
 T. Coulhon and X.T. Duong, Riesz transforms for $1\leq p\leq 2$,
 {\it Trans. Amer. Math. Soc.} \textbf{351} (1999), no. 2, 1151--1169.
%

\bibitem{CRT}
T. Coulhon, E. Russ and V. Tardivel-Nachef,
\newblock {\it Sobolev algebras on Lie groups and Riemannian manifolds}, 
\newblock Amer. J. of Math. \textbf{123} (2001), 283--342.

%
%
%
 \bibitem{Da}
 E.B. Davies, Non-Gaussian aspects of heat kernel behaviour, 
 {\it J. London Math. Soc.} \textbf{55} (1997), 105--125.
%

\bibitem{16}
N. Dungey, A.F.M. ter Elst and D. W. Robinson,
\newblock Analysis on Lie groups with polynomial growth,
\newblock {\it Progress in Mathematics} \textbf{214}, 2003, Birk\"auser Boston Inc., Boston, MA.
 
 \bibitem{DY1}
 X.T. Duong, L.~Yan,
  Duality of {H}ardy and {BMO} spaces associated with operators with
   heat kernel bounds,
 {\it J. Amer. Math. Soc.} \textbf{18}, no.4 (2005), 943-973.
 
 \bibitem{DY}
 X.T. Duong, L.~Yan,
 New function spaces of {BMO} type, the {J}ohn-{N}iremberg inequality,
   {I}nterplation and {A}pplications,
 {\it Comm. on Pures and Appl. Math.} \textbf{58} no.10 (2005), 1375-1420.
%
%
%
%
%
%
%
%
%
%

\bibitem{Phd}
D. Frey,
\newblock Paraproducts via $H^\infty$-functional calculus and a $T(1)$-Theorem for non-integral operators,
\newblock Phd Thesis, available at http://digbib.ubka.uni-karlsruhe.de/volltexte/documents/1687378 

\bibitem{Frey}
D. Frey,
\newblock Paraproducts via $H^\infty$-functional calculus,
\newblock available at http://arxiv.org/abs/1107.4348.

\bibitem{GS}
I. Gallagher and Y. Sire,
\newblock Besov algebras on Lie groups of polynomial growth and related results,
\newblock available at http://arxiv.org/abs/1010.0154
 
%
%
%
%
 \bibitem{G}
 A. Grigor'yan, Estimates of heat kernels on Riemannian manifolds, in {\it Spectral theory and geometry}, Londond Math. Soc. Lecture Note Series, B. Davies, Y. Safarov ed., \textbf{273} (1999), 140--225.
%
\bibitem{Gui}
Y. Guivarc'h, 
Croissance polynomiale et p\'eriodes des fonctions harmoniques, 
{\it Bull. Soc. Math. France} \textbf{101} (1973), 333--379.

 \bibitem{hajlasz4}
 P.~Hajlasz and P.~Koskela, {S}obolev met {P}oincar\'{e}, {\it Mem. Amer.
   Math. Soc.} \textbf{145} (2000), no.~688, 1--101.
%
%
%
%
 \bibitem{HM}
 S.~Hofmann, S.~Mayboroda,
  Hardy and {BMO} spaces associated to divergence form elliptic
   operators,
  {\em Math. Ann.} \textbf{344} (2009), no. 1, 37-116.
%
%
%
%
 \bibitem{KZ}
 S.~Keith and X.~Zhong, The {P}oincar\'e inequality is an open ended
   condition, {\it Ann. of Math.} \textbf{167} (2008), no.~2, 575--599.
%
%
%
%
%
 \bibitem{Mc}
 A. McInstosh,
 Operators which have an $H_\infty$-calculus, 
 {\em Miniconference on operator theory and partial differential equations} (1986) Proc. Centre Math. Analysis, ANU, Canberra \textrm{14}, 210--231.
%
%
%

\bibitem{meyer}
Y. Meyer, Remarques sur un th\'eor\`eme de J.M. Bony, {\it Suppl. Rendiconti del Circ. Mate. di Palermo} \textbf{1} (1980), 8--17.
 
 \bibitem{NSW}
 A. Nagel, E. M. Stein and S. Wainger, 
 Balls and metrics defined by vector fields I: Basic properties,
 {\it Acta Math.} \textbf{155} (1985), 103--147.
 
%
%
%

\bibitem{Robinson}
D. W. Robinson, 
{\it Elliptic Operators and Lie Groups},
 Oxford Univerisity Press, 1991.

 \bibitem{R}
 J. P. Roth, 
 Op\'erateurs dissipatifs et semi-groupes dans les espaces de fonctions continues, 
 {\it Ann. Inst. Fourier, Grenoble} \textbf{26} (1976) no. 4, 1--97.
%
%
 \bibitem{SC}
 L.~Saloff-Coste, \emph{Aspects of Sobolev type inequalities}, Cambridge Univ.,
   2001.
%
 \bibitem{Si}
 A. Sikora, 
 \newblock Sharp pointiwse estimates for heat kernels,
 \newblock {\it Quart. J. Math. Oxford} \textbf{47} (1996), 371--382.

%
%
%
%
%

\bibitem{Triebel}
H. Triebel,
\newblock Spaces of Besov-Hardy-Sobolev type on complete Riemannian manifolds,
\newblock {\it Ark. Mat.} \textbf{24} (1986), 299--337.
 
%

\end{thebibliography}
\end{document}